\documentclass[11pt]{article}

\usepackage{graphicx}
\usepackage{multirow}
\usepackage{amsmath,amssymb,amsfonts,amsthm}
\usepackage{fullpage}
\usepackage{float}
\usepackage{url}
\usepackage{mathrsfs}
\usepackage{xcolor}
\usepackage{textcomp}
\usepackage{booktabs}
\usepackage{tikz}
\usepackage{pgfplots}
\pgfplotsset{compat=1.18}
\usepackage{hyperref}
\usetikzlibrary{calc,angles,quotes}
\usepackage{caption}
\newcommand{\nn}{\nonumber}
\IfFileExists{bbm.sty}{\usepackage{bbm}\newcommand{\R}{\mathbbm{R}}}{\newcommand{\R}{\mathbb{R}}}
\newcommand{\ip}[2]{\langle #1,\, #2 \rangle}

\newtheorem{theorem}{Theorem}[section]
\newtheorem{lemma}[theorem]{Lemma}
\newtheorem{proposition}[theorem]{Proposition}
\newtheorem{corollary}[theorem]{Corollary}
\newtheorem{definition}[theorem]{Definition}
\newtheorem{remark}[theorem]{Remark}

\title{The Method of Ellipcenters for Strongly Convex
  Functions\thanks{The author was supported by the National Science
        Foundation, Grant DMS-2307328, and by an internal
        grant from NIU.}}
\date{}

\begin{document}

\maketitle

\begin{center}
Yunier Bello-Cruz\\
Department of Mathematical Sciences,
Northern Illinois University\\
DeKalb, IL 60115, USA\\[2pt]
{\tt yunierbello@niu.edu}
\end{center}

\bigskip

\begin{abstract}
\noindent The Method of Ellipcenters (ME), introduced in~\cite{ME2025}
for strongly convex quadratic minimization, uses two gradient
evaluations per iteration: one at the current iterate and one
at a companion point on the same level set.
We extend ME to the broader class of strongly convex
functions with Lipschitz continuous gradient.
We prove that ME contracts unconditionally at the linear
rate $1-\mu^2/L^2$, and that at every step where the two
gradient directions are linearly independent, which, in
dimension at least two, is every step generically, it
matches the rate of gradient descent with exact line
search.
In that linearly independent case, a midpoint argument
exploiting the level-set symmetry yields a further per-step
improvement, which is global when the angle between the two
gradients is uniformly bounded away from zero.
The same symmetry forces this angle to be obtuse, so the
improvement is strictly active at every such step.
ME also converges in at most two steps in dimension two.
Numerical experiments on regularized logistic regression
confirm the theoretical predictions.

\medskip\noindent
\textbf{Keywords:} gradient methods, strong convexity,
linear convergence, method of ellipcenters.
\end{abstract}

\section{Introduction}\label{sec:intro}

We extend the Method of Ellipcenters (ME)~\cite{ME2025},
originally developed for unconstrained minimization of
strongly convex quadratic functions, to $L$-smooth,
$\mu$-strongly convex functions.
Starting from a non-stationary iterate $x^k$, ME moves
along $-\nabla f(x^k)$ to find a companion point $y^k$ on
the same level set as $x^k$, then sets $x^{k+1}$ to the
minimizer of $f$ over the two-dimensional affine plane
$\Pi_k$ spanned from $x^k$ by $\nabla f(x^k)$ and
$\nabla f(y^k)$.

For a quadratic $f(x)=\tfrac{1}{2}x^TAx-b^Tx+c$ with $A$
symmetric positive definite, the level set through $x^k$ is
an ellipsoid whose intersection with $\Pi_k$ is an ellipse
$E_k$ with center $x^{k+1}$, hence the name.
The paper~\cite{ME2025} identified three geometric
properties of $E_k$:
\begin{description}
  \item[ME1] $E_k$ is an ellipse contained in $\Pi_k$;
  \item[ME2] $E_k$ is orthogonal to $\nabla f(x^k)$
    at $x^k$;
  \item[ME3] $E_k$ is orthogonal to $\nabla f(y^k)$
    at $y^k$.
\end{description}
Linear convergence at rate
$1-\lambda_{\min}(A)/\lambda_{\max}(A)$ was proved there,
and ME was shown to compare favorably with gradient methods,
and Nesterov's accelerated gradient~\cite{Nesterov1983}.
Extending ME beyond quadratics was listed as an open
problem in~\cite{ME2025}.

For a general $\mu$-strongly convex $f$, the level set is a
smooth, strictly convex hypersurface, and
$C_k=\{f=f(x^k)\}\cap\Pi_k$ is a closed, strictly convex
planar curve still satisfying ME2 and ME3.
Although $C_k$ need not be an ellipse, the minimizer of $f$
on $\Pi_k$ is its natural center.
The scalar $t_k$ is no longer available in closed form and
must be located by bisection (Lemmas~\ref{lem:uniqueness}
and~\ref{lem:existence}); the two-dimensional step replaces
the $2\times 2$ linear system of~\cite{ME2025} by
\[
  \min_{(\alpha,\beta)\in\R^2}
  f\bigl(x^k+\alpha\nabla f(x^k)+\beta\nabla f(y^k)\bigr),
\]
which reduces to the original system for quadratic $f$.

The key structural observation is that the first-order
optimality conditions for the 2D minimization force
$\nabla f(x^{k+1})$ to be orthogonal to both
$\nabla f(x^k)$ and $\nabla f(y^k)$.
Since $x^{k+1}-x^k\in\mathrm{span}
\{\nabla f(x^k),\nabla f(y^k)\}$, this gives
$\langle\nabla f(x^{k+1}),x^{k+1}-x^k\rangle=0$,
which annihilates the inner product in the smoothness
lower bound (Theorem~2.1.5 of~\cite{Nesterov2004}, which we
refer to loosely as the Baillon--Haddad inequality) and
yields
\begin{equation}\label{eq:BH_intro}
  f(x^k)-f(x^{k+1})
  \;\geq\;
  \frac{\|\nabla f(x^{k+1})\|^2+\|\nabla f(x^k)\|^2}{2L}.
\end{equation}
Two applications of the Polyak--\L{}ojasiewicz inequality
then give $f(x^{k+1})-f(x^*)\leq
\frac{\kappa-1}{\kappa+1}(f(x^{k})-f(x^*))$
at every linearly independent step
(Theorem~\ref{thm:convergence}).
The collinear case, where this 2D argument does not apply,
is handled separately and yields the unconditional rate
$1-\mu^2/L^2$.

Since $f(y^k)=f(x^k)$, the exact linesearch from $y^k$
along $-\nabla f(y^k)$ achieves the same rate $\eta^*$ by
the same argument.
Both linesearch endpoints lie in $\Pi_k$, and $x^{k+1}$
minimizes $f$ over all of $\Pi_k$; a midpoint argument
gives a per-step rate $\bar\eta_k$ satisfying
$\bigl((\kappa-1)/(\kappa+1)\bigr)^2
<\bar\eta_k<(\kappa-1)/(\kappa+1)$ for all $\kappa\geq 2$
(Theorem~\ref{thm:strict}).

We prove that $y^k$ exists and is unique for any
differentiable $\mu$-strongly convex $f$
(Lemmas~\ref{lem:uniqueness} and~\ref{lem:existence}),
and give a complete algorithm that recovers the quadratic
case of~\cite{ME2025}.
For the rate, we separate two regimes.
Unconditionally, at every step, whether the two gradients
are linearly independent or not, ME contracts at rate
$\eta_0=1-\mu^2/L^2$ (Theorem~\ref{thm:convergence}\ref{item:rate_global});
in the linearly dependent branch this rests only on strong
convexity and the companion-step bound $t_k\geq 2/L$
(Lemma~\ref{lem:tk}).
At every linearly independent (LI) step, which, for
$n\geq 2$, is every step generically, the 2D optimality
conditions yield~\eqref{eq:BH_intro} and the sharper rate
$\eta^*=(\kappa-1)/(\kappa+1)$
(Proposition~\ref{prop:orthogonality},
Theorem~\ref{thm:convergence}), matching gradient descent
with exact linesearch.
In the LI case we obtain a further improved rate
$\bar\eta_k=(\kappa-1)/(\kappa+1)-\sin^2\theta_k/(4\kappa^2)$,
where $\theta_k$ is the angle between $\nabla f(x^k)$ and
$\nabla f(y^k)$, satisfying
$\bigl((\kappa-1)/(\kappa+1)\bigr)^2
<\bar\eta_k<(\kappa-1)/(\kappa+1)$
for all $\kappa\geq 2$, with a uniform improvement when
$\sin^2\theta_k\geq c>0$ (Theorem~\ref{thm:strict}).
The level-set symmetry $f(y^k)=f(x^k)$ further forces the
two gradients to form an obtuse angle, with the explicit
margin $\cos\theta_k\leq-1/(\kappa(1+2\sqrt{\kappa}))<0$
(Proposition~\ref{prop:obtuse}); this makes the per-step
improvement strictly active at every LI step, with no
side condition.
We also prove per-step dominance of ME over exact linesearch
and one-step convergence in $n=2$
(Proposition~\ref{prop:gradient_comparison},
Corollary~\ref{cor:dim2}).

Section~\ref{sec:prelim} collects key inequalities.
Section~\ref{sec:algorithm} establishes well-definedness
of $y^k$, presents the algorithm, and reduces to the
quadratic case.
Section~\ref{sec:convergence} gives the orthogonality result
and both convergence theorems.
Section~\ref{sec:gradient_comp} establishes per-step
dominance over exact line search and one-step convergence
in $n=2$.
Section~\ref{sec:numerical} reports numerical experiments;
Section~\ref{sec:conclusion} closes with open problems.

\section{Preliminaries}\label{sec:prelim}

For $x,y\in\R^n$ we write $\ip{x}{y}=x^Ty$ with induced
norm $\|\cdot\|$, and set $\kappa=L/\mu$.

\begin{definition}[$\mu$-strong convexity]\label{def:sc}
$f:\R^n\to\R$ is $\mu$-strongly convex for $\mu>0$
if
\begin{equation}\label{eq:sc}
  f(y)\;\geq\;f(x)+\ip{\nabla f(x)}{y-x}
       +\frac{\mu}{2}\|y-x\|^2
  \qquad\forall\,x,y\in\R^n.
\end{equation}
\end{definition}

\begin{definition}[$L$-smoothness]\label{def:smooth}
$f:\R^n\to\R$ is $L$-smooth for $L>0$ if
\begin{equation}\label{eq:smooth}
  \|\nabla f(x)-\nabla f(y)\|\;\leq\; L\|x-y\|
  \qquad\forall\,x,y\in\R^n.
\end{equation}
\end{definition}

Throughout, $f:\R^n\to\R$ is differentiable,
$\mu$-strongly convex, and $L$-smooth with $0<\mu<L$.
The case $\mu=L$ forces $f$ to be an isotropic quadratic,
in which Corollary~\ref{cor:dim2} already gives one-step
convergence; we exclude it to avoid trivial statements.

\begin{proposition}[Standard inequalities]
\label{prop:standard}
Let $f$ be $\mu$-strongly convex and $L$-smooth with
minimizer $x^*$.
For all $x\in\R^n$:
\begin{enumerate}
  \item\label{item:descent}
    (Descent Lemma)
    $f(x-\tfrac{1}{L}\nabla f(x))
    \leq f(x)-\|\nabla f(x)\|^2/(2L)$.
  \item\label{item:PL}
    (Polyak--\L{}ojasiewicz~\cite{Polyak1963})
    $\|\nabla f(x)\|^2\geq 2\mu(f(x)-f(x^*))$.
  \item\label{item:BH}
    (Smoothness lower bound,
    Theorem~2.1.5 of~\cite{Nesterov2004})
    For all $y\in\R^n$:
    $f(y)\geq f(x)+\ip{\nabla f(x)}{y-x}
    +\|\nabla f(y)-\nabla f(x)\|^2/(2L)$.
  \item\label{item:sc_lower}
    $f(x)-f(x^*)\geq(\mu/2)\|x-x^*\|^2$.
  \item\label{item:smooth_upper}
    $f(x)-f(x^*)\leq(L/2)\|x-x^*\|^2$.
  \item\label{item:cocoercive}
    $\|\nabla f(x)\|^2\leq 2L(f(x)-f(x^*))$.
  \item\label{item:coercive}
    $\lim_{\|x\|\to+\infty}f(x)=+\infty$.
\end{enumerate}
\end{proposition}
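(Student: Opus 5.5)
I will prove the seven items in an order that lets later items reuse earlier ones, relying only on \eqref{eq:sc}, \eqref{eq:smooth}, and the existence of a minimizer $x^*$ with $\nabla f(x^*)=0$.

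\textbf{Upper quadratic bound and items (\ref{item:descent}), (\ref{item:smooth_upper}), (\ref{item:cocoercive}).} From \eqref{eq:smooth}, write $f(y)-f(x)-\ip{\nabla f(x)}{y-x}=\int_0^1\ip{\nabla f(x+s(y-x))-\nabla f(x)}{y-x}\,ds$ and apply Cauchy--Schwarz to obtain
\[
f(y)\le f(x)+\ip{\nabla f(x)}{y-x}+\tfrac{L}{2}\|y-x\|^2 .
\]
Taking $y=x-\tfrac1L\nabla f(x)$ gives (\ref{item:descent}). Taking $x=x^*$, where $\nabla f(x^*)=0$, gives (\ref{item:smooth_upper}). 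Since $f(x^*)\le f(x-\tfrac1L\nabla f(x))$, item (\ref{item:descent}) then gives (\ref{item:cocoercive}).

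\textbf{Strong convexity items (\ref{item:PL}), (\ref{item:sc_lower}), (\ref{item:coercive}).} Minimize the right-hand side of \eqref{eq:sc} over $y$. The minimizer is $y=x-\nabla f(x)/\mu$, so $f(y)\ge f(x)-\|\nabla f(x)\|^2/(2\mu)$ for every $y$. Setting $y=x^*$ gives PL (\ref{item:PL}). Applying \eqref{eq:sc} with base point $x^*$ gives (\ref{item:sc_lower}). Applying \eqref{eq:sc} with base point $0$ gives $f(x)\ge f(0)-\|\nabla f(0)\|\|x\|+\tfrac\mu2\|x\|^2\to\infty$, which is (\ref{item:coercive}). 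Coercivity, together with continuity and strict convexity, also justifies the existence and uniqueness of $x^*$. Logically I would establish this before the items that use $x^*$, but it needs only the base point $0$.

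\textbf{Item (\ref{item:BH}), the main step.} I will use Nesterov's tilting argument. Fix $x$ and set $\phi(z)=f(z)-\ip{\nabla f(x)}{z}$. Then $\phi$ is convex and $L$-smooth with $\nabla\phi(x)=0$, so $x$ minimizes $\phi$; convexity of $f$ follows from \eqref{eq:sc}. Applying the bound $\phi(x)\le\phi(y)-\|\nabla\phi(y)\|^2/(2L)$, which is item (\ref{item:descent}) for $\phi$ at the point $y$ combined with minimality of $x$, gives
\[
f(x)-\ip{\nabla f(x)}{x}\le f(y)-\ip{\nabla f(x)}{y}-\tfrac{1}{2L}\|\nabla f(y)-\nabla f(x)\|^2 .
\]
Rearranging yields exactly (\ref{item:BH}).

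The only care point is that the descent lemma for $\phi$ uses $L$-smoothness of $\phi$. This is immediate, because $\nabla\phi=\nabla f-\nabla f(x)$ has the same Lipschitz constant. It also uses global minimality of $x$ for $\phi$, which holds by convexity. I expect no genuine obstacle; all items are standard.
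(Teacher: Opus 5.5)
Your proof is correct and matches the paper. Items 6 and 7 are argued exactly as the paper does: the descent lemma combined with $f(x^*)\le f(x-\tfrac1L\nabla f(x))$, and strong convexity at a fixed base point. For items 1--5 the paper only cites Nesterov, and you supply the standard textbook derivations. Your tilting proof of item 3 is precisely Nesterov's proof of the cited Theorem~2.1.5. It avoids the detour through Baillon--Haddad cocoercivity that the paper's parenthetical remark suggests, since in Nesterov's development cocoercivity is deduced from this inequality, not conversely.
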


\begin{proof}
Items~\ref{item:descent}--\ref{item:smooth_upper} are
standard; proofs can be found in~\cite{Nesterov2004}.
Item~\ref{item:PL} is due to~\cite{Polyak1963}.
Item~\ref{item:BH} is Theorem~2.1.5 of~\cite{Nesterov2004};
it follows from convexity together with the
Baillon--Haddad theorem, which states that $\nabla f$ is
$\tfrac1L$-cocoercive.
We refer to it loosely as the Baillon--Haddad inequality
below, as is common in this literature.

For item~\ref{item:cocoercive}: the Descent Lemma gives
$f(x^*)\leq f(x-\tfrac{1}{L}\nabla f(x))\leq
f(x)-\|\nabla f(x)\|^2/(2L)$;
rearranging yields $\|\nabla f(x)\|^2\leq 2L(f(x)-f(x^*))$.

For item~\ref{item:coercive}: fix any $x_0\in\R^n$ and
apply~\eqref{eq:sc} with $x=x_0$;
the right-hand side grows without bound as
$\|y-x_0\|\to\infty$.
\end{proof}

\begin{remark}\label{rem:BH_role}
The Baillon--Haddad inequality is what separates ME from
gradient descent.
For gradient descent, items~\ref{item:descent}
and~\ref{item:PL} suffice to give rate $1-\mu/L$.
For ME, orthogonality of $\nabla f(x^{k+1})$ to the step
$x^{k+1}-x^k$ removes the inner product in
item~\ref{item:BH}, giving bound~\eqref{eq:BH_intro},
which carries $\|\nabla f(x^{k+1})\|^2$ in addition to
$\|\nabla f(x^k)\|^2$ and leads to the improved rate.
\end{remark}

\section{The Method of Ellipcenters}\label{sec:algorithm}

We now present the algorithm in full generality.
We first establish that the companion point $y^k$ is
well-defined, then describe the algorithm and its
reduction to the quadratic case of~\cite{ME2025}.

\subsection{Existence and uniqueness of the companion
  point}\label{sec:welldefined}

The companion point $y^k = x^k - t_k\nabla f(x^k)$ must
lie on the same level set as $x^k$, that is,
$f(y^k) = f(x^k)$.
For quadratic $f$ the step $t_k$ is given
by~\eqref{eq:tk_quad} in closed form; for general
strongly convex $f$ one first needs to know that such a
$t_k$ exists and is unique.
This is the content of the next two lemmas.

\begin{definition}[Line]
A line in $\R^n$ is $\{tp+(1-t)q:t\in\R\}$ for
distinct $p,q\in\R^n$.
\end{definition}

The first lemma records a basic but useful fact: a level
set of a strongly convex function meets any line in at
most two points.

\begin{lemma}[Lines and level sets]\label{lem:uniqueness}
Let $f:\R^n\to\R$ be $\mu$-strongly convex.
For any $c\in\R$ and any line $\mathcal{L}$, the set
$\mathcal{L}\cap\{f=c\}$ has at most two elements.
\end{lemma}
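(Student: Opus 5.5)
The plan is to reduce the claim to a statement about a scalar function of one real variable. Parametrize the line as $\mathcal{L}=\{q+t(p-q):t\in\R\}$ with $d=p-q\neq 0$, and set $\phi(t)=f(q+td)$. The points of $\mathcal{L}\cap\{f=c\}$ correspond bijectively to the solutions of $\phi(t)=c$, because the map $t\mapsto q+td$ is injective when $d\neq 0$. It therefore suffices to show that $\phi$ takes any given value at most twice.

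The key property is that $\phi$ is strictly convex. I would show this directly from Definition~\ref{def:sc}, since differentiability is assumed throughout. Apply \eqref{eq:sc} at the points $x=q+sd$ and $y=q+td$. This gives
\[
\phi(t)\geq \phi(s)+\phi'(s)(t-s)+\tfrac{\mu}{2}\|d\|^2(t-s)^2,
\]
so $\phi$ is $\mu\|d\|^2$-strongly convex on $\R$, and in particular strictly convex because $\mu\|d\|^2>0$. No smoothness assumption is needed for this step.

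Now suppose, for contradiction, that there are three parameters $t_1<t_2<t_3$ with $\phi(t_i)=c$ for each $i$. Write $t_2=\lambda t_1+(1-\lambda)t_3$ with $\lambda\in(0,1)$. Strict convexity then gives
\[
c=\phi(t_2)<\lambda\phi(t_1)+(1-\lambda)\phi(t_3)=c,
\]
which is a contradiction. Hence there are at most two solutions, and so at most two points in $\mathcal{L}\cap\{f=c\}$.

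The argument is routine throughout. The only point needing care is deriving strict convexity from the first-order inequality rather than from a chord inequality. To do this, apply \eqref{eq:sc} from $x=q+t_2d$ toward both $y=q+t_1d$ and $y=q+t_3d$. Take the convex combination of the two resulting inequalities with weights $\lambda$ and $1-\lambda$. The linear terms $\phi'(t_2)(t_i-t_2)$ cancel, because $\lambda(t_1-t_2)+(1-\lambda)(t_3-t_2)=0$. What remains is
\[
\lambda\phi(t_1)+(1-\lambda)\phi(t_3)\geq \phi(t_2)+(\text{a strictly positive quadratic term}),
\]
which is exactly the strict inequality used above.
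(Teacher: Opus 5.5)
Your proof is correct and is essentially the paper's argument. You assume three points of $\mathcal{L}$ lie on $\{f=c\}$, apply \eqref{eq:sc} at the middle point toward the two outer points, and combine with weights $\lambda,1-\lambda$ so the gradient terms cancel; the strictly positive quadratic gap then contradicts the three values being equal. The difference is only presentational: you first pass to the scalar restriction $\phi(t)=f(q+td)$ and spell out the cancellation, whereas the paper works directly in $\R^n$ and quotes the resulting chord bound $f(tx+(1-t)z)\leq tf(x)+(1-t)f(z)-\tfrac{\mu t(1-t)}{2}\|x-z\|^2$.
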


\begin{proof}
Suppose for contradiction that three distinct points
$x,y,z\in\mathcal{L}$ satisfy $f(x)=f(y)=f(z)=\tilde f$.
Reorder them so that $y=tx+(1-t)z$ for some $t\in(0,1)$.
Strong convexity~\eqref{eq:sc} applied at $y$ gives
\[
  \tilde f
  \;\leq\;
  t\tilde f+(1-t)\tilde f
  -\frac{\mu t(1-t)}{2}\|x-z\|^2
  \;=\;
  \tilde f - \frac{\mu t(1-t)}{2}\|x-z\|^2,
\]
which forces $\|x-z\|=0$, contradicting $x\neq z$.
\end{proof}

With this, existence and uniqueness of $t_k$ follow from
a simple intermediate value argument.

\begin{lemma}[Existence and uniqueness of the companion point]\label{lem:existence}
Let $f:\R^n\to\R$ be differentiable and $\mu$-strongly
convex, and let $x\in\R^n$ with $\nabla f(x)\neq 0$.
There exists a unique $t>0$ such that
$f(x-t\nabla f(x))=f(x)$.
\end{lemma}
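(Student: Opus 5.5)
Set $g=\nabla f(x)\neq 0$ and define the one-variable function $\varphi(t)=f(x-tg)-f(x)$ for $t\in\R$. The plan is to show that $\varphi$ takes negative values for small $t>0$ and positive values for large $t$. The intermediate value theorem then gives a root in $(0,\infty)$, and Lemma~\ref{lem:uniqueness} gives uniqueness. Since $f$ is differentiable, it is continuous, so $\varphi$ is continuous on $\R$ and $\varphi(0)=0$.

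\emph{Negativity near zero.} Differentiability of $f$ at $x$ gives $\varphi(t)=-t\|g\|^2+o(t)$ as $t\to 0$. Hence there is $t_1>0$ with $\varphi(t_1)<0$.

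\emph{Positivity at infinity.} Apply the strong convexity inequality~\eqref{eq:sc} at the point $x$ with $y=x-tg$:
\[
  \varphi(t)\;\geq\;-t\|g\|^2+\frac{\mu t^2}{2}\|g\|^2 .
\]
The right-hand side is positive for every $t>2/\mu$. In particular $\varphi(t_2)>0$ for, say, $t_2=4/\mu$, which satisfies $t_2>t_1$ after shrinking $t_1$ if necessary. Item~\ref{item:coercive} of Proposition~\ref{prop:standard} would give the same conclusion, but the explicit bound is cleaner.

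\emph{Existence.} By the intermediate value theorem on $[t_1,t_2]$, there is $t\in(t_1,t_2)\subset(0,\infty)$ with $\varphi(t)=0$, that is, $f(x-tg)=f(x)$.

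\emph{Uniqueness.} Suppose there were two distinct values $t,t'>0$ with $\varphi(t)=\varphi(t')=0$. Then the three distinct points $x$, $x-tg$ and $x-t'g$ would all lie on the line $\{x-sg:s\in\R\}$ and on the level set $\{f=f(x)\}$. This contradicts Lemma~\ref{lem:uniqueness}. Distinctness of these points uses $g\neq 0$.

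None of these steps is a real obstacle. The only point needing care is the sign near zero, which must come from differentiability alone because smoothness is not assumed in this lemma. The first-order expansion suffices for that.
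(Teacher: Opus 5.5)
Your proof is correct and follows essentially the same route as the paper. Existence comes from an intermediate value argument, and uniqueness from Lemma~\ref{lem:uniqueness}, which the paper gives as its parenthetical alternative to the monotonicity argument. In the existence step you replace the appeal to coercivity with the explicit bound $\varphi(t)\geq t\|g\|^2(\mu t/2-1)$, which also places the root in $(0,2/\mu]$. You justify the sign near $t=0$ by the first-order expansion, which is slightly more careful than the paper's ``strictly decreasing at $t=0$''.
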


\begin{proof}
Set $g(t)=f(x-t\nabla f(x))$, a strictly convex function
of $t\in\R$ as the restriction of the strictly convex $f$
to a line.
Since $g'(0)=-\|\nabla f(x)\|^2<0$, $g$ is strictly
decreasing at $t=0$.
Coercivity (Proposition~\ref{prop:standard}\ref{item:coercive})
applies because $\|x-t\nabla f(x)\|\to\infty$ as
$t\to\infty$ (here $\nabla f(x)\neq 0$), so $g(t)\to+\infty$.
By the intermediate value theorem there is at least one
$t>0$ with $g(t)=g(0)$.
Uniqueness follows from strict convexity: $g$ has a unique
minimizer $t^*>0$, is strictly decreasing on $(0,t^*)$ and
strictly increasing on $(t^*,\infty)$, so the level $g(0)$
is attained exactly once for $t>0$.
(Equivalently, Lemma~\ref{lem:uniqueness} bounds the
intersection of the line with $\{f=g(0)\}$ by two points,
one being $t=0$.)
\end{proof}

\paragraph{Computing $t_k$.}
In practice, a right bracket $\bar t$ satisfying
$g(\bar t)>f(x^k)$ is found by repeated doubling from
$t=0$, and $t_k$ is then located by bisection on
$(0,\bar t)$.
For the quadratic $f(w)=\tfrac{1}{2}w^TAw-b^Tw+c$ with
$A$ symmetric positive definite, bisection is unnecessary
and $t_k$ is available in closed form:
\begin{equation}\label{eq:tk_quad}
  t_k\;=\;\frac{2\,\|\nabla f(x^k)\|^2}
               {\nabla f(x^k)^T\!A\,\nabla f(x^k)};
\end{equation}
see equation~(3) of~\cite{ME2025}.

\subsection{Algorithm}\label{sec:alg_desc}

Set $v^k=\nabla f(x^k)$, $w^k=\nabla f(y^k)$, and
\begin{equation}\label{eq:Pi_k}
  \Pi_k=\bigl\{x^k+\alpha v^k+\beta w^k
    :(\alpha,\beta)\in\R^2\bigr\},
  \qquad
  x^{k+1}=\underset{x\in\Pi_k}{\arg\min}\;f(x).
\end{equation}

\smallskip
\noindent\rule[0.5ex]{\columnwidth}{1pt}\\[-2pt]
\textbf{Algorithm ME.}
Method of Ellipcenters for $L$-smooth $\mu$-strongly
convex $f$.\\[-6pt]
\noindent\rule[0.5ex]{\columnwidth}{0.4pt}

\noindent\textbf{Input:} $f$, $x^1\in\R^n$,
$\varepsilon>0$; set $k=1$.

\smallskip
\noindent\textbf{Step~1.}
If $\|\nabla f(x^k)\|\leq\varepsilon$, \textbf{stop} and
return $x^k$.

\smallskip
\noindent\textbf{Step~2.}
Find $t_k>0$ via bisection with
$f(x^k-t_k v^k)=f(x^k)$.
Set $y^k=x^k-t_k v^k$ and $w^k=\nabla f(y^k)$.

\smallskip
\noindent\textbf{Step~3 (LI case).}
If $v^k$ and $w^k$ are linearly independent, solve
\[
  (\alpha_k,\beta_k)
  =\arg\min_{(\alpha,\beta)\in\R^2}
   f(x^k+\alpha v^k+\beta w^k)
\]
and set $x^{k+1}=x^k+\alpha_k v^k+\beta_k w^k$.

\noindent\textbf{Step~3 (LD case).}
If $v^k\parallel w^k$, set $x^{k+1}$ to either
(a)~the minimizer of $f(x^k-\lambda t_k v^k)$ over
$\lambda\in[0,1]$ (segment minimizer), or
(b)~the midpoint $x^k-\tfrac{t_k}{2}v^k$.

\smallskip
\noindent\textbf{Step~4.}
Set $k\leftarrow k+1$ and go to Step~1.\\[-6pt]
\noindent\rule[0.5ex]{\columnwidth}{1pt}

\begin{remark}[LD case]\label{rem:LD}
When $v^k\parallel w^k$, $\Pi_k$ degenerates to a line
and the LI formula reduces to an exact linesearch along
$-v^k$.
Since $f(x^k)=f(y^k)$ and $f$ is strictly convex, the
function $g(\lambda)=f(x^k-\lambda t_k v^k)$ satisfies
$g(0)=g(1)$ and $g(\lambda)<g(0)$ for all
$\lambda\in(0,1)$, so both alternatives yield strict
descent.
The segment minimizer satisfies
$f(x^{k+1}_{\mathrm{seg}})\leq f(x^{k+1}_{\mathrm{mid}})$,
with equality only when $g$ is symmetric about
$\lambda=1/2$.
For quadratic $f$, the midpoint coincides with the ME
step of~\cite{ME2025}.
Both choices lie in $\Pi_k$, so
Theorem~\ref{thm:convergence}\ref{item:rate_global}
applies; whether the segment minimizer achieves rate
$\eta^*$ in the LD case is open.
\end{remark}

\begin{remark}[Sufficient conditions for linear
  independence]\label{rem:LI_conditions}
The vectors $v^k$ and $w^k$ are linearly dependent if and
only if $\nabla f(y^k)=\alpha\nabla f(x^k)$ for some
$\alpha\in\R$.
Since $y^k=x^k-t_kv^k$, the mean value theorem gives
$w^k-v^k\approx -t_k\nabla^2 f(x^k)v^k$, so linear
dependence requires $v^k$ to be approximately an
eigenvector of $\nabla^2 f(x^k)$.
Three sufficient conditions for LI are as follows.
\begin{enumerate}
  \item Quadratic $f$, non-eigenvector iterate.
    For $f(x)=\tfrac{1}{2}x^TAx-b^Tx+c$, one has
    $w^k=v^k-t_kAv^k$ exactly, so $v^k\parallel w^k$ if
    and only if $v^k$ is an eigenvector of $A$.
    The LD set is the union of finitely many affine
    subspaces through $x^*$, hence measure-zero; LI holds
    for Lebesgue-almost every $x^k$.
  \item General $f$, distinct Hessian eigenvalues.
    If $\nabla^2 f(x^k)$ has $n$ distinct eigenvalues
    and $\nabla f(x^k)$ is not an eigenvector, then $v^k$
    and $w^k$ are linearly independent.
    This is the generic situation.
  \item Non-radial level sets.
    If $f$ is not radially symmetric about $x^*$, then
    LD can occur only on a measure-zero subset of the
    level set $\{f=f(x^k)\}$.
\end{enumerate}
In practice, LD is the degenerate exception.
Our experiments on regularized logistic regression showed
LI at every outer iteration, consistent with condition~2.
\end{remark}

\begin{remark}[Geometric interpretation]\label{rem:geom}
The curve $C_k=\{f=f(x^k)\}\cap\Pi_k$ is closed and
strictly convex, satisfying ME2 and ME3.
For quadratic $f$ it is an ellipse with center $x^{k+1}$.
\end{remark}

\subsection{Solving the 2D subproblem}\label{sec:2Dsolve}

The first-order conditions for Step~3 (LI) are
\begin{equation}\label{eq:optimality_2D}
  \ip{\nabla f(x^{k+1})}{v^k}=0
  \qquad\text{and}\qquad
  \ip{\nabla f(x^{k+1})}{w^k}=0.
\end{equation}
Two practical solvers are available: Newton's method
(one step is exact for quadratics~\cite{ME2025}), and
gradient descent on $F^k(\alpha,\beta)=
f(x^k+\alpha v^k+\beta w^k)$ with Lipschitz constant
$L_F\leq L(\|v^k\|^2+\|w^k\|^2)$.
The analysis below assumes Step~3 is solved exactly.

\subsection{Reduction to the quadratic
  case}\label{sec:quadratic_reduction}

For $f(w)=\tfrac{1}{2}w^TAw-b^Tw+c$: $t_k$ is given
by~\eqref{eq:tk_quad}; $F^k$ is quadratic so one Newton
step reproduces the $(\alpha_k,\beta_k)$ formulas
of~\cite{ME2025}; and in the LD case the midpoint gives
$x^{k+1}=\tfrac{1}{2}(x^k+y^k)$, recovering~\cite{ME2025}.

\section{Convergence Analysis}\label{sec:convergence}

The analysis proceeds in two steps.
We first establish a gradient orthogonality property
that is the key structural consequence of the 2D
minimization, then use it to derive the convergence rates.

\begin{proposition}[Gradient orthogonality,
  LI case]\label{prop:orthogonality}
Suppose $v^k$ and $w^k$ are linearly independent.
Then:
\begin{enumerate}
  \item\label{item:orth1}
    $\ip{\nabla f(x^{k+1})}{v^k}=0$ and
    $\ip{\nabla f(x^{k+1})}{w^k}=0$.
  \item\label{item:orth2}
    $\|\nabla f(x^{k+1})-\nabla f(x^k)\|^2
    =\|\nabla f(x^{k+1})\|^2+\|\nabla f(x^k)\|^2
    \leq L^2\|x^{k+1}-x^k\|^2$.
  \item\label{item:orth3}
    (Baillon--Haddad descent)
    \begin{equation}\label{eq:key_BH}
      f(x^k)-f(x^{k+1})
      \;\geq\;
      \frac{\|\nabla f(x^{k+1})\|^2+
            \|\nabla f(x^k)\|^2}{2L}.
    \end{equation}
\end{enumerate}
\end{proposition}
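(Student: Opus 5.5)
The three items will be proved in order, each building on the previous one.

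\emph{Item~\ref{item:orth1}.} Since $v^k,w^k$ are linearly independent, the map $(\alpha,\beta)\mapsto x^k+\alpha v^k+\beta w^k$ is an affine bijection onto $\Pi_k$. The composite $F^k(\alpha,\beta)=f(x^k+\alpha v^k+\beta w^k)$ is therefore $\mu$-strongly convex on $\R^2$ (with modulus scaled by the smallest singular value of $[v^k\ w^k]$). It is also coercive, so a unique minimizer $(\alpha_k,\beta_k)$ exists. Differentiability of $f$ gives $\nabla F^k(\alpha_k,\beta_k)=\bigl(\ip{\nabla f(x^{k+1})}{v^k},\ip{\nabla f(x^{k+1})}{w^k}\bigr)=0$ by the chain rule. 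This is exactly~\eqref{eq:optimality_2D}.

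\emph{Item~\ref{item:orth2}.} Expand
\[
\|\nabla f(x^{k+1})-\nabla f(x^k)\|^2=\|\nabla f(x^{k+1})\|^2-2\ip{\nabla f(x^{k+1})}{v^k}+\|v^k\|^2 .
\]
The cross term vanishes by item~\ref{item:orth1}, since $v^k=\nabla f(x^k)$. The inequality is then just $L$-smoothness~\eqref{eq:smooth} applied to the pair $x^{k+1},x^k$.

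\emph{Item~\ref{item:orth3}.} Apply the smoothness lower bound of Proposition~\ref{prop:standard}\ref{item:BH} with the roles $x\leftarrow x^{k+1}$ and $y\leftarrow x^k$:
\[
f(x^k)\geq f(x^{k+1})+\ip{\nabla f(x^{k+1})}{x^k-x^{k+1}}+\frac{\|\nabla f(x^k)-\nabla f(x^{k+1})\|^2}{2L}.
\]
Since $x^k-x^{k+1}=-\alpha_kv^k-\beta_kw^k$ lies in $\mathrm{span}\{v^k,w^k\}$, item~\ref{item:orth1} kills the inner product. Substituting the identity from item~\ref{item:orth2} into the last term yields~\eqref{eq:key_BH}.

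The only point requiring care is the orientation in the last step. The base point must be $x^{k+1}$, where the gradient is orthogonal to the step; placing the base point at $x^k$ would leave the nonzero term $\ip{v^k}{x^{k+1}-x^k}$. The existence of the minimizer in item~\ref{item:orth1} is the other mild point, and it is settled by coercivity (Proposition~\ref{prop:standard}\ref{item:coercive}) restricted to the plane.
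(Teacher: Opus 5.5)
Your proof is correct and follows the paper's argument essentially step for step. The three ingredients are the same: first-order conditions on $\Pi_k$; the Pythagorean identity together with Lipschitz continuity of $\nabla f$; and the smoothness lower bound based at $x^{k+1}$, so that the inner product vanishes. Your extra justification that the minimizer exists, via strong convexity and coercivity of $F^k$, fills in a detail the paper leaves implicit. One small correction there: the modulus of $F^k$ is $\mu\,\sigma_{\min}^2$, i.e.\ scaled by the \emph{square} of the smallest singular value of $[v^k\ w^k]$.
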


\begin{proof}
Item~\ref{item:orth1}.
Since $x^{k+1}$ minimizes $f$ on $\Pi_k$ and the
directions $v^k,w^k$ span $\Pi_k$, the first-order
conditions~\eqref{eq:optimality_2D} hold.

\smallskip\noindent
Item~\ref{item:orth2}.
Because $v^k=\nabla f(x^k)$ and
$\ip{\nabla f(x^{k+1})}{v^k}=0$ by
item~\ref{item:orth1}, the vectors $\nabla f(x^{k+1})$
and $\nabla f(x^k)$ are orthogonal, giving the first
equality by the Pythagorean theorem.
The inequality follows from $L$-Lipschitz continuity of
$\nabla f$.

\smallskip\noindent
Item~\ref{item:orth3}.
Apply the Baillon--Haddad inequality
(Proposition~\ref{prop:standard}\ref{item:BH})
with the roles of $x$ and $y$ taken by $x^{k+1}$ and
$x^k$ respectively:
\[
  f(x^k)
  \geq f(x^{k+1})
  +\ip{\nabla f(x^{k+1})}{x^k-x^{k+1}}
  +\frac{\|\nabla f(x^k)-\nabla f(x^{k+1})\|^2}{2L}.
\]
Since $x^k-x^{k+1}\in\mathrm{span}\{v^k,w^k\}$ and
$\nabla f(x^{k+1})\perp v^k,w^k$ by
item~\ref{item:orth1}, the inner product vanishes.
Substituting item~\ref{item:orth2} into the last term
gives~\eqref{eq:key_BH}.
\end{proof}

\begin{remark}[The role of orthogonality]\label{rem:orth_role}
Bound~\eqref{eq:key_BH} improves the Descent Lemma,
which gives only $f(x^k)-f(x^{k+1})\geq\|v^k\|^2/(2L)$.
The extra term $\|\nabla f(x^{k+1})\|^2/(2L)$, converted
via PL at $x^{k+1}$, is what produces the improved
rate $\eta^*$.
Item~\ref{item:orth1} is the non-quadratic analog of the
conjugate-gradient residual orthogonality.
\end{remark}

With Proposition~\ref{prop:orthogonality} in hand, the
convergence rate follows by applying PL at both $x^k$
and $x^{k+1}$.
Before stating it, we record a bound on the companion step
$t_k$ that we will lean on twice: once to cover the linearly
dependent case below, and again in
Proposition~\ref{prop:obtuse}.
It is nothing more than reading off where the one-dimensional
restriction of $f$ along $-v^k$ returns to its starting
value.

\begin{lemma}[Companion-step bounds]\label{lem:tk}
Let $v^k=\nabla f(x^k)\neq 0$ and let $t_k>0$ be the unique
scalar with $f(x^k-t_k v^k)=f(x^k)$.
Then
\begin{align}\label{eq:tk_sandwich}
  \frac{2}{L}\;\leq\;t_k\;\leq\;\frac{2}{\mu}.
\end{align}
\end{lemma}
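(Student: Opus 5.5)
The plan is to study the one-dimensional restriction $g(t)=f(x^k-t v^k)$ and use the quadratic upper and lower envelopes provided by $L$-smoothness and $\mu$-strong convexity. Write $v=v^k$ and $\|v\|^2=\|\nabla f(x^k)\|^2>0$.

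\textbf{Upper bound $t_k\leq 2/\mu$.} Apply the strong convexity inequality~\eqref{eq:sc} with base point $x^k$ and $y=y^k=x^k-t_kv$:
\[
  f(x^k)=f(y^k)\;\geq\; f(x^k)-t_k\|v\|^2+\frac{\mu}{2}t_k^2\|v\|^2 .
\]
Cancel $f(x^k)$ and divide by $t_k\|v\|^2>0$. This gives $0\geq -1+\tfrac{\mu}{2}t_k$, that is, $t_k\leq 2/\mu$.

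\textbf{Lower bound $t_k\geq 2/L$.} Use the standard quadratic upper bound implied by $L$-smoothness, $f(y)\leq f(x)+\ip{\nabla f(x)}{y-x}+\tfrac{L}{2}\|y-x\|^2$. This is the inequality underlying the Descent Lemma, Proposition~\ref{prop:standard}\ref{item:descent}. It gives
\[
  g(t)\;\leq\; f(x^k)-t\|v\|^2+\frac{L}{2}t^2\|v\|^2,
\]
and the right-hand side is strictly below $f(x^k)$ for every $t\in(0,2/L)$. So $g(t)<g(0)$ on $(0,2/L)$, and the unique positive root $t_k$ of $g(t)=g(0)$ from Lemma~\ref{lem:existence} cannot lie in $(0,2/L)$. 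Hence $t_k\geq 2/L$.

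The only delicate point is this second argument. It relies on uniqueness of the positive crossing, so that ``$g<g(0)$ on $(0,2/L)$'' excludes the root from that interval. Lemma~\ref{lem:existence} supplies this uniqueness, and $t_k>0$ is part of the hypothesis. The first bound is a direct one-line computation.

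As a sanity check, the quadratic formula~\eqref{eq:tk_quad} gives $t_k=2\|v\|^2/(v^TAv)\in[2/\lambda_{\max},2/\lambda_{\min}]$, which agrees with both bounds.
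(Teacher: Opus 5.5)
Your proof is correct and follows the paper's own argument. The paper uses the same two quadratic models of $g(t)=f(x^k-tv^k)$: the $L$-smoothness upper model keeps $g$ strictly below $g(0)$ on $(0,2/L)$, and the $\mu$-strong-convexity lower model yields $t_k\le 2/\mu$. You evaluate that lower model directly at $t_k$, which is slightly cleaner than the paper's ``must have returned by $2/\mu$'' phrasing.

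One small correction to your self-assessment: the lower bound does not actually need the uniqueness from Lemma~\ref{lem:existence}. Since $g<g(0)$ on all of $(0,2/L)$, every positive crossing is excluded from that interval, not only the unique one.
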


\begin{proof}
Set $\psi(t)=f(x^k-t v^k)$, a strictly convex function with
$\psi'(0)=-\lVert v^k\rVert^2<0$ and $\psi(t_k)=\psi(0)$.
The Descent Lemma
(Proposition~\ref{prop:standard}\ref{item:smooth_upper})
gives the upper model
$\psi(t)\leq\psi(0)-t\lVert v^k\rVert^2
+\tfrac{L}{2}t^2\lVert v^k\rVert^2$, whose two roots of
$\psi=\psi(0)$ are $t=0$ and $t=2/L$; since $\psi$ stays
below this model and $\psi(t_k)=\psi(0)$ with $t_k>0$, the
return to level $\psi(0)$ cannot happen before $t=2/L$, so
$t_k\geq 2/L$.
Strong convexity~\eqref{eq:sc} along the same line gives the
lower model
$\psi(t)\geq\psi(0)-t\lVert v^k\rVert^2
+\tfrac{\mu}{2}t^2\lVert v^k\rVert^2$, whose positive root is
$t=2/\mu$; since $\psi$ stays above this model, it must have
returned to $\psi(0)$ by $t=2/\mu$, so $t_k\leq 2/\mu$.
\end{proof}

\begin{theorem}[Linear convergence]\label{thm:convergence}
Let $f$ be $L$-smooth and $\mu$-strongly convex with
$0<\mu<L$, and let $\{x^k\}$ be generated by ME.
Write $\eta=1-\mu/L$, $\eta_0=1-\mu^2/L^2$, and
$\eta^*=(\kappa-1)/(\kappa+1)$.
\begin{enumerate}
  \item\label{item:rate_global}
   (Unconditional)
    $f(x^{k+1})-f(x^*)\leq\eta_0\,(f(x^k)-f(x^*))$
    for every $k\geq 1$, whether or not $v^k$ and $w^k$ are
    linearly independent.
  \item\label{item:rate_general}
    At any step $k$ where $v^k$ and $w^k$ are linearly
    independent, the sharper bound
    $f(x^{k+1})-f(x^*)\leq\eta\,(f(x^k)-f(x^*))$ holds.
  \item\label{item:rate_improved}
    At such a step, in fact
    \begin{equation}\label{eq:improved_rate}
      f(x^{k+1})-f(x^*)
      \;\leq\;\frac{\kappa-1}{\kappa+1}\,(f(x^k)-f(x^*))
      \;=\;\eta^*(f(x^k)-f(x^*)),
    \end{equation}
    and $\eta^*\leq\eta\leq\eta_0$.
  \item\label{item:rate_x}
    $\|x^k-x^*\|^2\leq(L/\mu)\,\eta_0^{\,k-1}
    \|x^1-x^*\|^2$.
\end{enumerate}
\end{theorem}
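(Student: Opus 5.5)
The plan is to prove item~\ref{item:rate_improved} first, since items~\ref{item:rate_general} and (in the LI branch)~\ref{item:rate_global} then follow by comparing rates; the LD branch of item~\ref{item:rate_global} needs a separate midpoint argument, and item~\ref{item:rate_x} is a routine translation from function values to distances. Throughout write $\Delta_k=f(x^k)-f(x^*)$.

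\textbf{Item~\ref{item:rate_improved}.} At an LI step, start from~\eqref{eq:key_BH} in Proposition~\ref{prop:orthogonality}. Apply PL (Proposition~\ref{prop:standard}\ref{item:PL}) at both $x^k$ and $x^{k+1}$, i.e.\ $\|\nabla f(x^{j})\|^2\ge 2\mu\Delta_{j}$ for $j=k,k+1$. This gives
\[
\Delta_k-\Delta_{k+1}\ \ge\ \frac{\Delta_k+\Delta_{k+1}}{\kappa}.
\]
Rearranging yields $\Delta_{k+1}(1+1/\kappa)\le \Delta_k(1-1/\kappa)$, that is, $\Delta_{k+1}\le\eta^*\Delta_k$. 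The chain of rates is elementary:
\[
\frac{\kappa-1}{\kappa+1}\ \le\ \frac{\kappa-1}{\kappa}\ =\ 1-\frac1\kappa\ \le\ 1-\frac1{\kappa^2}.
\]
This proves item~\ref{item:rate_general}. It also proves item~\ref{item:rate_global} at LI steps.

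\textbf{Item~\ref{item:rate_global}, LD branch.} This is the main obstacle, because orthogonality gives nothing here. By Remark~\ref{rem:LD}, the segment minimizer has value at most that of the midpoint, so it suffices to bound $f(m)$ with $m=x^k-\tfrac{t_k}{2}v^k=\tfrac12(x^k+y^k)$.

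First, the midpoint form of strong convexity follows by applying~\eqref{eq:sc} at $m$ toward $x^k$ and toward $y^k$ and averaging. Using also $f(y^k)=f(x^k)$, this gives
\[
f(m)\ \le\ \tfrac12\bigl(f(x^k)+f(y^k)\bigr)-\tfrac{\mu}{8}\|x^k-y^k\|^2 \ =\ f(x^k)-\tfrac{\mu}{8}t_k^2\|v^k\|^2 .
\]
Next, Lemma~\ref{lem:tk} gives $t_k\ge 2/L$, so the decrease is at least $\mu\|v^k\|^2/(2L^2)$. Finally, PL at $x^k$ turns this into a decrease of at least $(\mu^2/L^2)\Delta_k$, which is exactly $\Delta_{k+1}\le\eta_0\Delta_k$. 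Since $m\in\Pi_k$, the same bound holds at any step for the 2D minimizer as well.

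\textbf{Item~\ref{item:rate_x}.} Iterating item~\ref{item:rate_global} gives $\Delta_k\le\eta_0^{k-1}\Delta_1$. Combine this with Proposition~\ref{prop:standard}\ref{item:sc_lower} and~\ref{item:smooth_upper}:
\[
\|x^k-x^*\|^2\ \le\ \frac{2}{\mu}\Delta_k\ \le\ \frac2\mu\,\eta_0^{k-1}\Delta_1\ \le\ \frac{L}{\mu}\,\eta_0^{k-1}\|x^1-x^*\|^2 .
\]
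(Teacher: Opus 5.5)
Your proof is correct and uses the same ingredients as the paper's. Item~3 comes from orthogonality, the Baillon--Haddad bound~\eqref{eq:key_BH}, and PL at $x^k$ and $x^{k+1}$; the linearly dependent branch uses the strong-convexity midpoint bound together with $t_k\geq 2/L$ and PL; item~4 uses the quadratic sandwich.

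The differences are organizational. You get item~2 as a corollary of item~3, whereas the paper proves it from the Descent Lemma at $x^k-\tfrac1L v^k\in\Pi_k$. The paper's route needs only $f(x^{k+1})\leq f(x^k-\tfrac1L v^k)$, not exact stationarity on $\Pi_k$. You also handle the segment-minimizer option~(a) explicitly, and you observe that $f(x^{k+1})\leq f(m)$ with $m\in\Pi_k$ gives $\eta_0$ at every step. The paper's proof leaves both of these points implicit.
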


\begin{proof}
Item~\ref{item:rate_global (unconditional).}
We treat the two branches of the algorithm separately and
take the worse of the two bounds.

Suppose first that $v^k$ and $w^k$ are linearly dependent.
Then ME sets $x^{k+1}=\tfrac12(x^k+y^k)$, the midpoint of
$x^k$ and its companion.
Writing $y^k=x^k-t_k v^k$, strong convexity~\eqref{eq:sc}
applied at the midpoint of the segment $[x^k,y^k]$ gives
\[
  f\!\left(\tfrac{x^k+y^k}{2}\right)
  \;\leq\;
  \tfrac12 f(x^k)+\tfrac12 f(y^k)
  -\frac{\mu}{8}\lVert x^k-y^k\rVert^2.
\]
Since $f(y^k)=f(x^k)$ and $x^k-y^k=t_k v^k$, this collapses
to
\[
  f(x^{k+1})
  \;\leq\;
  f(x^k)-\frac{\mu}{8}\,t_k^2\,\lVert v^k\rVert^2.
\]
The companion-step bound $t_k\geq 2/L$ of
Lemma~\ref{lem:tk} then gives
$f(x^{k+1})\leq f(x^k)-\tfrac{\mu}{2L^2}\lVert v^k\rVert^2$,
and PL at $x^k$
(Proposition~\ref{prop:standard}\ref{item:PL}) turns
$\lVert v^k\rVert^2\geq 2\mu(f(x^k)-f(x^*))$ into
\[
  f(x^{k+1})-f(x^*)
  \;\leq\;
  \Bigl(1-\frac{\mu^2}{L^2}\Bigr)\bigl(f(x^k)-f(x^*)\bigr)
  \;=\;\eta_0\,\bigl(f(x^k)-f(x^*)\bigr).
\]
Now suppose $v^k$ and $w^k$ are linearly independent.
By item~\ref{item:rate_general}, proved below,
$f(x^{k+1})-f(x^*)\leq\eta(f(x^k)-f(x^*))$, and since
$\eta=1-\mu/L\leq 1-\mu^2/L^2=\eta_0$ for $0<\mu\leq L$, the
$\eta_0$ bound holds a fortiori.
Either way $f(x^{k+1})-f(x^*)\leq\eta_0(f(x^k)-f(x^*))$,
which is the unconditional claim.

\smallskip\noindent
Item~\ref{item:rate_general (linearly independent
case).}
Here $\Pi_k$ is a genuine two-dimensional plane, the point
$x^k-\tfrac{1}{L}v^k$ belongs to it (take $\alpha=-1/L$,
$\beta=0$), and $x^{k+1}$ minimizes $f$ on $\Pi_k$, so the
Descent Lemma gives
\[
  f(x^{k+1})-f(x^*)
  \;\leq\;
  f(x^k)-f(x^*)-\frac{\|v^k\|^2}{2L}.
\]
Applying PL at $x^k$ yields
$\|v^k\|^2\geq 2\mu(f(x^k)-f(x^*))$,
hence $f(x^{k+1})-f(x^*)\leq\eta(f(x^k)-f(x^*))$.

\smallskip\noindent
Item~\ref{item:rate_improved}.
Still in the linearly independent case, from
Proposition~\ref{prop:orthogonality}\ref{item:orth3}:
\[
  \bigl(f(x^k)-f(x^*)\bigr)
  -\bigl(f(x^{k+1})-f(x^*)\bigr)
  \;\geq\;
  \frac{\|\nabla f(x^{k+1})\|^2+\|v^k\|^2}{2L}.
\]
Applying PL at $x^{k+1}$ and $x^k$ to the two terms on
the right:
\[
  \bigl(f(x^k)-f(x^*)\bigr)
  -\bigl(f(x^{k+1})-f(x^*)\bigr)
  \;\geq\;
  \frac{\mu}{L}
  \Bigl[\bigl(f(x^{k+1})-f(x^*)\bigr)
        +\bigl(f(x^k)-f(x^*)\bigr)\Bigr].
\]
Rearranging:
$\bigl(f(x^{k+1})-f(x^*)\bigr)(1+\mu/L)
\leq\bigl(f(x^k)-f(x^*)\bigr)(1-\mu/L)$,
which is~\eqref{eq:improved_rate}.
The ordering $\eta^*\leq\eta\leq\eta_0$ is elementary:
$\eta^*=1-2/(\kappa+1)\leq 1-1/\kappa=\eta$ since
$2/(\kappa+1)\geq 1/\kappa$ for $\kappa\geq 1$, and
$\eta=1-\mu/L\leq 1-\mu^2/L^2=\eta_0$.

\smallskip\noindent
Item~\ref{item:rate_x}.
Items~\ref{item:sc_lower} and~\ref{item:smooth_upper}
of Proposition~\ref{prop:standard} give
$(\mu/2)\|x^k-x^*\|^2\leq f(x^k)-f(x^*)
\leq(L/2)\|x^k-x^*\|^2$.
Combining with item~\ref{item:rate_global} (iterated from
$k=1$) yields the claim.
\end{proof}

\begin{remark}[The linearly dependent case]\label{rem:LD_safeguard}
Item~\ref{item:rate_global} is the price of honesty about
the linearly dependent branch.
When $v^k\parallel w^k$ there is no two-dimensional plane to
minimize over, and the clean argument behind the LI rate
$\eta$ is simply unavailable; the midpoint
$\tfrac12(x^k+y^k)$ need not beat the gradient step
$x^k-\tfrac1L v^k$.
What survives is the weaker but unconditional $\eta_0$,
bought with nothing more than strong convexity and
$t_k\geq 2/L$.
For $n\geq 2$ exact gradient alignment is non-generic, so in
practice every step is linearly independent and the sharp
rate $\eta^*$ governs; the LD branch is a worst-case
safeguard, and the only place it is forced is $n=1$.
\end{remark}

The rate $\eta^*$ of Theorem~\ref{thm:convergence} is not
the end of the story.
Since $f(y^k)=f(x^k)$, an exact linesearch from $y^k$
achieves the same rate $\eta^*$, and $x^{k+1}$ dominates
both endpoints because it minimizes $f$ over all of $\Pi_k$.
A midpoint argument then squeezes out a further improvement,
whose size is governed by the angle $\theta_k$ between the
two gradients.
The level-set symmetry forces this angle to be obtuse
(Proposition~\ref{prop:obtuse}), so the improvement is
strictly active at every linearly independent step, with no
side condition.

\begin{lemma}[Exact-linesearch step length]\label{lem:LS_step}
Let $f$ be $L$-smooth and $\mu$-strongly convex, let
$u\in\R^n$ with $d:=\nabla f(u)\neq 0$, and let
$s^*=\arg\min_{s\geq 0}f(u-sd)$ be the exact-linesearch step
along $-d$.
Then
\begin{align}\label{eq:s_sandwich}
  \frac{1}{L}\;\leq\;s^*\;\leq\;\frac{1}{\mu}.
\end{align}
\end{lemma}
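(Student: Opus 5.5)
Set $\phi(s)=f(u-sd)$ for $s\in\R$. This is a strictly convex, differentiable function of one variable with $\phi'(s)=-\ip{\nabla f(u-sd)}{d}$ and $\phi'(0)=-\|d\|^2<0$. By coercivity (Proposition~\ref{prop:standard}\ref{item:coercive}) $\phi$ has a unique minimizer, and since $\phi'(0)<0$ this minimizer is some $s^*>0$. It is characterized by $\phi'(s^*)=0$, that is, $\ip{\nabla f(u-s^*d)}{d}=0$. The plan is to convert this orthogonality condition into two-sided bounds on $s^*$, using the two Lipschitz-type properties of $\nabla f$ along the line.

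For the lower bound, we use $L$-smoothness. Since $\ip{\nabla f(u-s^*d)}{d}=0$, we have
\[
  \|d\|^2=\ip{d-\nabla f(u-s^*d)}{d}\leq \|\nabla f(u)-\nabla f(u-s^*d)\|\,\|d\|\leq L s^*\|d\|^2 .
\]
Dividing by $\|d\|^2>0$ gives $s^*\geq 1/L$. Alternatively, we could argue via the Descent Lemma upper model: $\phi'(s)\leq -\|d\|^2+Ls\|d\|^2<0$ for $s<1/L$, so $\phi$ is still strictly decreasing there.

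For the upper bound, we use strong monotonicity of $\nabla f$. Adding~\eqref{eq:sc} to itself with $x$ and $y$ swapped gives $\ip{\nabla f(x)-\nabla f(y)}{x-y}\geq\mu\|x-y\|^2$. We apply this with $x=u$ and $y=u-s^*d$, so that $x-y=s^*d$:
\[
  s^*\ip{d-\nabla f(u-s^*d)}{d}\geq \mu (s^*)^2\|d\|^2 .
\]
The orthogonality condition reduces the left side to $s^*\|d\|^2$. Dividing by $s^*\|d\|^2>0$ yields $s^*\leq 1/\mu$.

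There is no real obstacle. The only points needing care are, first, confirming that the constrained minimizer over $s\geq 0$ coincides with the unconstrained one, which holds because $\phi'(0)<0$ forces it to be interior so that the first-order condition holds with equality. Second, deriving strong monotonicity cleanly from Definition~\ref{def:sc}, since the paper only states the function-value form.
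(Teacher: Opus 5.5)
Your proof is correct and is essentially the paper's argument: the paper brackets $\phi'(0)+\mu\lVert d\rVert^2 s\le\phi'(s)\le\phi'(0)+L\lVert d\rVert^2 s$, which is just strong monotonicity and Lipschitz continuity of $\nabla f$ restricted to the line, and evaluates it at $\phi'(s^*)=0$, exactly as your two displayed inequalities do at the single point $s^*$. You also spell out two things the paper leaves implicit: that the constrained minimizer is interior, and how strong monotonicity follows from \eqref{eq:sc}.
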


\begin{proof}
Set $\phi(s)=f(u-sd)$, so $\phi'(s)=-\ip{\nabla f(u-sd)}{d}$
and $\phi'(0)=-\lVert d\rVert^2<0$.
The composition of an $L$-smooth, $\mu$-strongly convex
function with the affine map $s\mapsto u-sd$ is
$L\lVert d\rVert^2$-smooth and $\mu\lVert d\rVert^2$-strongly
convex on $\R$, hence
\begin{align}\label{eq:phi_bracket}
  \phi'(0)+\mu\lVert d\rVert^2\,s
  \;\leq\;\phi'(s)\;\leq\;
  \phi'(0)+L\lVert d\rVert^2\,s
  \qquad\text{for all }s\geq 0.
\end{align}
The minimizer $s^*>0$ satisfies $\phi'(s^*)=0$.
The right inequality of~\eqref{eq:phi_bracket} at $s=s^*$
gives $0\leq-\lVert d\rVert^2+L\lVert d\rVert^2 s^*$, so
$s^*\geq 1/L$; the left inequality at $s=s^*$ gives
$0\geq-\lVert d\rVert^2+\mu\lVert d\rVert^2 s^*$, so
$s^*\leq 1/\mu$.
\end{proof}

The next result is the structural heart of the improvement.
It uses nothing beyond the defining property $f(y^k)=f(x^k)$
of the companion point: the two gradients $v^k$ and $w^k$
always form an obtuse angle, with an explicit quantitative
margin.
This guarantees $\sin^2\theta_k>0$ at every linearly
independent step, so the per-step gain of
Theorem~\ref{thm:strict} is unconditional rather than
generic.

\begin{proposition}[The companion gradients are
  obtuse]\label{prop:obtuse}
Let $v^k=\nabla f(x^k)\neq 0$, let
$y^k=x^k-t_k v^k$ be the companion point with
$f(y^k)=f(x^k)$, and let $w^k=\nabla f(y^k)$.
Then
\begin{align}\label{eq:obtuse}
  \ip{v^k}{w^k}
  \;\leq\;
  -\frac{\mu}{2}\,t_k\,\lVert v^k\rVert^2
  \;<\;0.
\end{align}
Consequently the angle $\theta_k$ between $v^k$ and $w^k$
satisfies $\theta_k>\pi/2$, with the explicit bound
\begin{align}\label{eq:cos_margin}
  \cos\theta_k
  \;\leq\;
  -\frac{1}{\kappa\bigl(1+2\sqrt{\kappa}\bigr)}
  \;<\;0.
\end{align}
\end{proposition}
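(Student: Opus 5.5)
The plan is to derive \eqref{eq:obtuse} from a single application of strong convexity anchored at the companion point, and then convert it into the cosine bound \eqref{eq:cos_margin}. For the conversion, the companion-step bound of Lemma~\ref{lem:tk} controls the numerator, and a comparison of $\lVert w^k\rVert$ with $\lVert v^k\rVert$ controls the denominator.

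\emph{Step 1: the inner product.} Apply \eqref{eq:sc} with base point $y^k$ and target point $x^k$:
\[
  f(x^k)\;\geq\;f(y^k)+\ip{w^k}{x^k-y^k}+\frac{\mu}{2}\lVert x^k-y^k\rVert^2 .
\]
Since $f(y^k)=f(x^k)$ and $x^k-y^k=t_k v^k$, the function values cancel. This leaves $0\geq t_k\ip{w^k}{v^k}+\tfrac{\mu}{2}t_k^2\lVert v^k\rVert^2$. Dividing by $t_k>0$ (Lemma~\ref{lem:existence}) gives \eqref{eq:obtuse}. Strict negativity follows from $t_k>0$ and $v^k\neq 0$, and this already yields $\theta_k>\pi/2$.

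\emph{Step 2: normalizing.} By definition,
\[
  \cos\theta_k=\ip{v^k}{w^k}/(\lVert v^k\rVert\lVert w^k\rVert)\;\leq\;-\tfrac{\mu}{2}t_k\lVert v^k\rVert/\lVert w^k\rVert .
\]
Lemma~\ref{lem:tk} gives $t_k\geq 2/L$, so the factor $\tfrac{\mu}{2}t_k$ is at least $1/\kappa$. It remains to bound $\lVert w^k\rVert/\lVert v^k\rVert$ from above.

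\emph{Step 3: the gradient ratio (the main obstacle).} Using Lipschitz continuity alone gives only $\lVert w^k\rVert\leq(1+Lt_k)\lVert v^k\rVert$. Combined with $t_k$, this produces $-\mu t_k/(2(1+Lt_k))$, which still depends on $t_k$. That expression is monotone in $t_k$ and is weakest at $t_k=2/L$, giving only the constant $-1/(6\kappa)$, which is not uniform in the form claimed.

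I would instead exploit the level-set symmetry once more. Proposition~\ref{prop:standard}\ref{item:cocoercive} at $y^k$ gives $\lVert w^k\rVert^2\leq 2L(f(y^k)-f(x^*))$. This equals $2L(f(x^k)-f(x^*))$ because $f(y^k)=f(x^k)$. PL at $x^k$ (Proposition~\ref{prop:standard}\ref{item:PL}) then bounds the latter by $\kappa\lVert v^k\rVert^2$, so $\lVert w^k\rVert\leq\sqrt{\kappa}\,\lVert v^k\rVert$. Substituting into Step 2 yields $\cos\theta_k\leq-\kappa^{-3/2}$.

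\emph{Step 4: comparison with the claimed constant.} Since $\kappa^{3/2}\leq\kappa(1+2\sqrt\kappa)$, we have $-\kappa^{-3/2}\leq-1/(\kappa(1+2\sqrt\kappa))$, which implies \eqref{eq:cos_margin}. If the authors' constant arises from a triangle-inequality route, for instance $\lVert w^k\rVert\leq\lVert v^k\rVert+\lVert w^k-v^k\rVert$ with $\lVert w^k-v^k\rVert$ estimated via the cocoercive bound, I would check that route as an alternative. The argument above already suffices, however.
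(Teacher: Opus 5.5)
Your proof is correct. Step~1 and the numerator estimate $\tfrac{\mu}{2}t_k\geq 1/\kappa$ are exactly the paper's argument; where you differ is the bound on $\lVert w^k\rVert$. The paper applies the smoothness lower bound (Proposition~\ref{prop:standard}\ref{item:BH}) along the segment $[x^k,y^k]$. There $f(y^k)=f(x^k)$ cancels the function values and leaves $\lVert w^k-v^k\rVert^2\leq 2Lt_k\lVert v^k\rVert^2$. The paper then uses the \emph{upper} companion bound $t_k\leq 2/\mu$ and the triangle inequality to get $\lVert w^k\rVert\leq(1+2\sqrt\kappa)\lVert v^k\rVert$. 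You go through the optimal value instead: the bound $\lVert\nabla f(y^k)\rVert^2\leq 2L(f(y^k)-f(x^*))$, the level-set identity, and PL at $x^k$ give $\lVert w^k\rVert\leq\sqrt\kappa\,\lVert v^k\rVert$. Your route uses only the lower half of Lemma~\ref{lem:tk}. It also gives a sharper margin, $\kappa^{-3/2}$, roughly twice the paper's $1/(\kappa(1+2\sqrt\kappa))$ for large $\kappa$. The paper's route, in exchange, stays local (it never refers to $x^*$) and controls $\lVert w^k-v^k\rVert$ itself.

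One correction to your Step~3: the Lipschitz route you set aside is not an obstacle. At $t_k=2/L$ the quantity $\mu t_k/(2(1+Lt_k))$ equals $1/(3\kappa)$, not $1/(6\kappa)$. As you noted, it is increasing in $t_k$, and $t_k\geq 2/L$, so it gives the uniform bound $\cos\theta_k\leq-1/(3\kappa)$. Since $1+2\sqrt\kappa>3$ for $\kappa>1$, this already implies \eqref{eq:cos_margin}. It beats the paper's margin for every $\kappa>1$, and beats yours once $\kappa>9$. The paper's own estimate also yields $1/(3\kappa)$ if $t_k$ is kept coupled in the denominator rather than replaced by $2/\mu$.
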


\begin{proof}
Apply $\mu$-strong convexity~\eqref{eq:sc} at the pair
$(x^k,y^k)$:
\[
  f(x^k)\;\geq\;f(y^k)+\ip{\nabla f(y^k)}{x^k-y^k}
  +\frac{\mu}{2}\lVert x^k-y^k\rVert^2.
\]
Using $f(x^k)=f(y^k)$, $\nabla f(y^k)=w^k$, and
$x^k-y^k=t_k v^k$,
\[
  0\;\geq\;t_k\ip{w^k}{v^k}
  +\frac{\mu}{2}t_k^2\lVert v^k\rVert^2,
\]
and dividing by $t_k>0$ gives~\eqref{eq:obtuse}.

From~\eqref{eq:obtuse} and the bound $t_k\geq 2/L$ of
Lemma~\ref{lem:tk},
$\ip{v^k}{w^k}\leq-(\mu/L)\lVert v^k\rVert^2
=-\kappa^{-1}\lVert v^k\rVert^2$.
For the denominator, the smoothness lower bound
(Proposition~\ref{prop:standard}\ref{item:BH}) at
$(y^k,x^k)$ together with $f(y^k)=f(x^k)$ gives
$\lVert w^k-v^k\rVert^2\leq 2L t_k\lVert v^k\rVert^2
\leq 4\kappa\lVert v^k\rVert^2$, where the last step uses
$t_k\leq 2/\mu$ from Lemma~\ref{lem:tk}; hence
$\lVert w^k\rVert\leq\lVert v^k\rVert+\lVert w^k-v^k\rVert
\leq(1+2\sqrt{\kappa})\lVert v^k\rVert$.
Therefore
\[
  \cos\theta_k
  =\frac{\ip{v^k}{w^k}}{\lVert v^k\rVert\,\lVert w^k\rVert}
  \;\leq\;
  \frac{-\kappa^{-1}\lVert v^k\rVert^2}
       {\lVert v^k\rVert\,(1+2\sqrt{\kappa})\lVert v^k\rVert}
  =-\frac{1}{\kappa\bigl(1+2\sqrt{\kappa}\bigr)},
\]
which is~\eqref{eq:cos_margin}.
In particular $\cos\theta_k<0$, so $\theta_k>\pi/2$.
\end{proof}

\begin{theorem}[Further improvement via level-set
  symmetry]\label{thm:strict}
Under the assumptions of Theorem~\ref{thm:convergence},
suppose $v^k$ and $w^k$ are linearly independent and let
$\theta_k\in(0,\pi)$ be the angle between them.
Then
\begin{align}\label{eq:strict_rate}
  f(x^{k+1})-f(x^*)
  \;\leq\;
  \bar\eta_k\,\bigl(f(x^{k})-f(x^*)\bigr),
  \qquad
  \bar\eta_k
  :=\frac{\kappa-1}{\kappa+1}
    -\frac{\sin^2\theta_k}{4\kappa^2}.
\end{align}
For all $\kappa\geq 2$,
\begin{align}\label{eq:sandwich}
  \left(\frac{\kappa-1}{\kappa+1}\right)^{\!2}
  \;<\;\bar\eta_k\;<\;\frac{\kappa-1}{\kappa+1}.
\end{align}
By Proposition~\ref{prop:obtuse} the angle is obtuse, so
$\sin^2\theta_k\geq 1-\cos^2\theta_k$ is bounded below by an
explicit positive quantity; in particular $\bar\eta_k<\eta^*$
strictly, with no further assumption.
If additionally $\sin^2\theta_k\geq c>0$ for all $k$, then
\begin{align}\label{eq:uniform}
  f(x^{k+1})-f(x^*)
  \;\leq\;
  \underbrace{\Bigl(\frac{\kappa-1}{\kappa+1}
  -\frac{c}{4\kappa^2}\Bigr)}_{=:\bar\eta<\eta^*}
  \bigl(f(x^{k})-f(x^*)\bigr).
\end{align}
\end{theorem}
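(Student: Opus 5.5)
The plan is to exploit that $x^{k+1}$ minimizes $f$ over all of $\Pi_k$. I will exhibit a point of $\Pi_k$ whose gap already satisfies the bound $\bar\eta_k(f(x^k)-f(x^*))$. That point is the midpoint of two exact-linesearch endpoints. Let $z^x=x^k-s_xv^k$ be the exact linesearch point from $x^k$ along $-v^k$, and $z^y=y^k-s_yw^k$ the one from $y^k$ along $-w^k$. Both lie in $\Pi_k$, since $y^k=x^k-t_kv^k$, so $m:=\tfrac12(z^x+z^y)\in\Pi_k$ and $f(x^{k+1})\leq f(m)$.

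\emph{Step 1: each endpoint achieves $\eta^*$.} At $z^x$ the exact-linesearch optimality gives $\ip{\nabla f(z^x)}{z^x-x^k}=0$. The argument of Proposition~\ref{prop:orthogonality} then applies verbatim: Baillon--Haddad with the inner product killed, plus $\nabla f(z^x)\perp v^k$ and Pythagoras, gives $f(x^k)-f(z^x)\geq(\|\nabla f(z^x)\|^2+\|v^k\|^2)/(2L)$. PL at both points then gives $f(z^x)-f(x^*)\leq\eta^*(f(x^k)-f(x^*))$, exactly as in Theorem~\ref{thm:convergence}\ref{item:rate_improved}. The same holds for $z^y$ relative to $y^k$. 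Since $f(y^k)=f(x^k)$, this is again $\eta^*(f(x^k)-f(x^*))$.

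\emph{Step 2: midpoint gain.} Strong convexity at the midpoint gives
\[
f(m)\leq\tfrac12 f(z^x)+\tfrac12 f(z^y)-\tfrac{\mu}{8}\|z^x-z^y\|^2 .
\]
The main point is to lower-bound $\|z^x-z^y\|$, and I expect this to be the main obstacle. We have $z^x-z^y=(t_k-s_x)v^k+s_yw^k$. Its component orthogonal to $v^k$ has norm $s_y\|w^k\|\sin\theta_k$, hence
\[
\|z^x-z^y\|^2\geq s_y^2\|w^k\|^2\sin^2\theta_k .
\]
Lemma~\ref{lem:LS_step} gives $s_y\geq1/L$, and PL at $y^k$ gives $\|w^k\|^2\geq2\mu(f(y^k)-f(x^*))=2\mu(f(x^k)-f(x^*))$. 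Together these yield
\[
\tfrac{\mu}{8}\|z^x-z^y\|^2\geq\frac{\mu^2}{4L^2}\sin^2\theta_k\,(f(x^k)-f(x^*)).
\]
Combining with Step 1 gives~\eqref{eq:strict_rate} with $\bar\eta_k=\eta^*-\sin^2\theta_k/(4\kappa^2)$.

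\emph{Step 3: the sandwich and the uniform bound.} Linear independence gives $\theta_k\in(0,\pi)$, so $\sin^2\theta_k>0$ and hence $\bar\eta_k<\eta^*$. Proposition~\ref{prop:obtuse} supplies the explicit margin $\sin^2\theta_k\geq1-\cos^2\theta_k$ bounded away from zero. For the lower bound in~\eqref{eq:sandwich}, using $\sin^2\theta_k\leq1$ it suffices that $\eta^*(1-\eta^*)=2(\kappa-1)/(\kappa+1)^2>1/(4\kappa^2)$. This is equivalent to $8\kappa^2(\kappa-1)>(\kappa+1)^2$, which holds at $\kappa=2$ ($32>9$). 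For $\kappa\geq2$ the left side grows faster than the right, so it holds throughout. Finally,~\eqref{eq:uniform} follows by substituting $\sin^2\theta_k\geq c$ into~\eqref{eq:strict_rate}.
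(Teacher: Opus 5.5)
Your proof is correct on every quantitative claim and follows the paper's argument essentially step for step. It uses the same pair of exact linesearches from $x^k$ and $y^k$ and the same midpoint in $\Pi_k$. It also uses the projection onto $(\R v^k)^\perp$ combined with $s_y\geq 1/L$ and PL at $y^k$, and the same comparison $8\kappa^2(\kappa-1)>(\kappa+1)^2$ for the sandwich.

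One sentence in Step 3 is wrong and should be dropped: Proposition~\ref{prop:obtuse} does not bound $\sin^2\theta_k$ away from zero. The bound $\cos\theta_k\leq -c_0<0$ gives $\cos^2\theta_k\geq c_0^2$, i.e.\ $\sin^2\theta_k\leq 1-c_0^2$. That is an upper bound, and it leaves $\theta_k$ free to approach $\pi$. For example, for a quadratic with $v^k$ close to an eigenvector of $A$, one has $w^k\approx -v^k$.

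The strict inequality $\bar\eta_k<\eta^*$ rests on linear independence alone, which you already invoke, as does the paper's proof. The ``explicit positive lower bound'' wording in the statement is not established by either argument, so you should not claim it.
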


\begin{proof}
Write $d_k:=f(x^k)-f(x^*)$ and recall
$\eta^*=(\kappa-1)/(\kappa+1)$.

Let $x^{k+1}_{GD}=x^k-t^*v^k$ be the exact linesearch from
$x^k$ along $-v^k$, so $\ip{\nabla f(x^{k+1}_{GD})}{v^k}=0$.
Applying the Baillon--Haddad inequality
(Proposition~\ref{prop:standard}\ref{item:BH}) at the pair
$(x^{k+1}_{GD},x^k)$, the inner-product term vanishes because
$x^k-x^{k+1}_{GD}=t^*v^k$ and $\nabla f(x^{k+1}_{GD})\perp
v^k$; the gradient-difference term equals
$\lVert\nabla f(x^{k+1}_{GD})\rVert^2+\lVert v^k\rVert^2$ by
the Pythagorean theorem; and applying the
Polyak--\L ojasiewicz inequality
(Proposition~\ref{prop:standard}\ref{item:PL}) at both
$x^{k+1}_{GD}$ and $x^k$ gives, exactly as in the proof of
Theorem~\ref{thm:convergence}\ref{item:rate_improved},
\begin{align}\label{eq:rate_GD}
  f(x^{k+1}_{GD})-f(x^*)\;\leq\;\eta^*\,d_k.
\end{align}
Let $z^k=y^k-s^*w^k$ be the exact linesearch from $y^k$ along
$-w^k$.
Since $f(y^k)=f(x^k)$, the suboptimality at $y^k$ also equals
$d_k$, and the identical argument, now with
$\ip{\nabla f(z^k)}{w^k}=0$, $y^k-z^k=s^*w^k$, and PL applied
at $z^k$ and $y^k$, gives
\begin{align}\label{eq:rate_z}
  f(z^k)-f(x^*)\;\leq\;\eta^*\,d_k.
\end{align}
Both $x^{k+1}_{GD}=x^k-t^*v^k$ and $z^k=x^k-t_kv^k-s^*w^k$
lie in $\Pi_k$.

Set $m^k=\tfrac12\bigl(x^{k+1}_{GD}+z^k\bigr)\in\Pi_k$.
Strong convexity~\eqref{eq:sc} along the segment
$[x^{k+1}_{GD},z^k]$ gives
\begin{align}\label{eq:mid_SC}
  f(m^k)
  \;\leq\;
  \frac{f(x^{k+1}_{GD})+f(z^k)}{2}
  -\frac{\mu}{8}\,\lVert x^{k+1}_{GD}-z^k\rVert^2.
\end{align}
Since $x^{k+1}$ minimizes $f$ over $\Pi_k$ and
$m^k\in\Pi_k$, we have $f(x^{k+1})\leq f(m^k)$; combining
with~\eqref{eq:mid_SC}, \eqref{eq:rate_GD},
and~\eqref{eq:rate_z},
\begin{align}\label{eq:mstep}
  f(x^{k+1})-f(x^*)
  \;\leq\;
  \eta^*\,d_k
  -\frac{\mu}{8}\,\lVert x^{k+1}_{GD}-z^k\rVert^2.
\end{align}

From $x^{k+1}_{GD}=x^k-t^*v^k$ and $z^k=x^k-t_kv^k-s^*w^k$,
\begin{align}\label{eq:diff}
  x^{k+1}_{GD}-z^k
  \;=\;(t_k-t^*)\,v^k+s^*\,w^k.
\end{align}
Let $P^\perp$ be the orthogonal projector onto
$(\R v^k)^\perp$.
Applying $P^\perp$ to~\eqref{eq:diff} annihilates the
$v^k$-parallel term, leaving
$P^\perp(x^{k+1}_{GD}-z^k)=s^*P^\perp w^k$.
Since orthogonal projection is norm non-increasing,
\begin{align}\label{eq:proj}
  \lVert x^{k+1}_{GD}-z^k\rVert^2
  \;\geq\;
  \bigl\lVert P^\perp\bigl(x^{k+1}_{GD}-z^k\bigr)\bigr\rVert^2
  \;=\;(s^*)^2\,\lVert P^\perp w^k\rVert^2
  \;=\;(s^*)^2\,\lVert w^k\rVert^2\sin^2\theta_k,
\end{align}
where the last equality uses
$\lVert P^\perp w^k\rVert=\lVert w^k\rVert\,
\lvert\sin\theta_k\rvert$, with $\theta_k$ the angle between
$w^k$ and $v^k$.
By Lemma~\ref{lem:LS_step} applied at $u=y^k$, $d=w^k$, the
exact-linesearch step from $y^k$ satisfies $s^*\geq 1/L$, and
the Polyak--\L ojasiewicz inequality at $y^k$ gives
$\lVert w^k\rVert^2\geq 2\mu\,(f(y^k)-f(x^*))=2\mu\,d_k$.
Substituting both into~\eqref{eq:proj},
\begin{align}\label{eq:sep_final}
  \lVert x^{k+1}_{GD}-z^k\rVert^2
  \;\geq\;
  \frac{1}{L^2}\cdot 2\mu\,d_k\cdot\sin^2\theta_k
  \;=\;
  \frac{2\mu\sin^2\theta_k}{L^2}\,d_k.
\end{align}

Inserting~\eqref{eq:sep_final} into~\eqref{eq:mstep},
\begin{align}
  f(x^{k+1})-f(x^*)
  &\;\leq\;
  \eta^*\,d_k
  -\frac{\mu}{8}\cdot\frac{2\mu\sin^2\theta_k}{L^2}\,d_k
  \;=\;
  \Bigl(\eta^*-\frac{\mu^2\sin^2\theta_k}{4L^2}\Bigr)d_k
  \nn\\
  &\;=\;
  \Bigl(\frac{\kappa-1}{\kappa+1}
        -\frac{\sin^2\theta_k}{4\kappa^2}\Bigr)d_k,
\end{align}
which is~\eqref{eq:strict_rate}.
The upper bound $\bar\eta_k<\eta^*$ in~\eqref{eq:sandwich} is
immediate from $\sin^2\theta_k>0$.
For the lower bound $\bar\eta_k>(\eta^*)^2$, subtract:
\begin{align}
  \bar\eta_k-(\eta^*)^2
  =\frac{\kappa-1}{\kappa+1}
   -\Bigl(\frac{\kappa-1}{\kappa+1}\Bigr)^{\!2}
   -\frac{\sin^2\theta_k}{4\kappa^2}
  =\frac{2(\kappa-1)}{(\kappa+1)^2}
   -\frac{\sin^2\theta_k}{4\kappa^2}.
\end{align}
Since $\sin^2\theta_k\leq 1$, it suffices that
$\dfrac{2(\kappa-1)}{(\kappa+1)^2}>\dfrac{1}{4\kappa^2}$, i.e.
$8\kappa^2(\kappa-1)>(\kappa+1)^2$, i.e.
$q(\kappa):=8\kappa^3-9\kappa^2-2\kappa-1>0$.
One has $q(2)=23>0$ and
$q'(\kappa)=24\kappa^2-18\kappa-2>0$ for $\kappa\geq 2$, so
$q$ is increasing on $[2,\infty)$ and stays positive; hence
\eqref{eq:sandwich} holds for all $\kappa\geq 2$.
Finally, if $\sin^2\theta_k\geq c>0$ for all $k$, replacing
$\sin^2\theta_k$ by $c$ in~\eqref{eq:strict_rate} yields the
uniform contraction factor $\bar\eta=\eta^*-c/(4\kappa^2)
<\eta^*$ of~\eqref{eq:uniform}.
\end{proof}

\begin{remark}[On the uniform improvement]\label{rem:uniform}
The condition $\sin^2\theta_k\geq c>0$ fails only when
$\nabla f(x^k)$ and $\nabla f(y^k)$ are nearly parallel.
Proposition~\ref{prop:obtuse} rules out the parallel case
in one direction: the gradients are always obtuse, with
$\cos\theta_k\leq-1/(\kappa(1+2\sqrt{\kappa}))$, so they can
never align.
The only way $\sin^2\theta_k$ approaches zero is the
anti-parallel limit $w^k\to-\lambda v^k$, which is
exactly the linearly dependent case excluded in the LI
branch.
Hence $\sin^2\theta_k>0$ at every LI step, and our
experiments found it bounded well away from zero throughout
(Section~\ref{sec:numerical}).
Whether $\sin^2\theta_k$ admits a uniform, $\kappa$-free
lower bound on the LI set remains open; this is the gap
between the per-step rate proved here and the empirical
behavior.
\end{remark}

\begin{remark}[Rate comparison]\label{rem:rate_comparison}
Tables~\ref{tab:rates_f} and~\ref{tab:rates_x} summarize
the convergence rates discussed in this section.

\begin{table}[H]
\centering
{\small
\begin{tabular}{|l|c|c|}
\hline
\textbf{Method}
  & \textbf{Rate for $f(x^{k})-f(x^*)$}
  & \textbf{Grads/step}\\
\hline
GD, step $1/L$
  & $1-1/\kappa$ & 1 \\
GD, exact linesearch
  & $(\kappa-1)/(\kappa+1)$ & 1 + search \\
ME (Thm.~\ref{thm:convergence}\ref{item:rate_improved})
  & $(\kappa-1)/(\kappa+1)$ & 2 \\
ME (Thm.~\ref{thm:strict}, $\sin^2\theta_k\geq c$)
  & $(\kappa-1)/(\kappa+1)-c/(4\kappa^2)$ & 2 \\
ME for quadratics~\cite{ME2025}
  & $\bigl((\kappa-1)/(\kappa+1)\bigr)^2$ & 2 \\
Nesterov~\cite{Nesterov1983} (sc)
  & $(1-1/\sqrt\kappa)^2$ & 1 \\
\hline
\end{tabular}
}
\vspace{4pt}
\caption{Rates for $f(x^{k})-f(x^*)$.
GD-exact achieves $(\kappa-1)/(\kappa+1)$ by the same
Baillon--Haddad argument; ME matches it and
Theorem~\ref{thm:strict} improves it further.}
\label{tab:rates_f}
\end{table}

\begin{table}[H]
\centering
{\small
\begin{tabular}{|l|c|c|}
\hline
\textbf{Method}
  & \textbf{Rate for $\|x^k-x^*\|^2$}
  & \textbf{Grads/step}\\
\hline
GD, step $1/L$
  & $1-1/\kappa$ & 1 \\
GD, step $2/(\mu+L)$
  & $\bigl((\kappa-1)/(\kappa+1)\bigr)^2$ & 1 \\
ME (Thm.~\ref{thm:convergence}\ref{item:rate_x})
  & $\kappa\,(1-1/\kappa)^{k-1}\|x^1-x^*\|^2$ & 2 \\
ME for quadratics~\cite{ME2025}
  & $\bigl((\kappa-1)/(\kappa+1)\bigr)^2$ & 2 \\
Nesterov~\cite{Nesterov1983} (sc)
  & $(1-1/\sqrt\kappa)^2$ & 1 \\
\hline
\end{tabular}
}
\vspace{4pt}
\caption{Rates for $\|x^k-x^*\|^2$.
The ME bound carries a factor $\kappa$ from the
smoothness/strong-convexity conversion.
GD with step $2/(\mu+L)$ avoids this factor via
co-coercivity; for quadratic $f$ the factor disappears
because $f(x)-f(x^*)=\tfrac{1}{2}\|x-x^*\|_A^2$.
Closing the gap in iterate distance for general $f$
is open.}
\label{tab:rates_x}
\end{table}

The asymmetry between the two tables is genuine.
In function values ME matches GD-exact and
Theorem~\ref{thm:strict} improves it further.
In iterate distance, GD with step $2/(\mu+L)$ avoids the
$\kappa$ factor through a co-coercivity cancellation
not available for ME with general $f$.
\end{remark}

\section{Further Properties of ME}
\label{sec:gradient_comp}

We close the theoretical part of the paper with two
additional properties: per-step dominance over exact
linesearch, and finite termination in dimension two.

\begin{proposition}[ME dominates exact
  linesearch]\label{prop:gradient_comparison}
Let $x\in\R^n$ with $\nabla f(x)\neq 0$.
Let $x_{\mathrm{ME}}$ denote the ME iterate from $x$ and
$x_{\mathrm{GD}}=x-t^*\nabla f(x)$ the exact-linesearch
iterate.
Then $f(x_{\mathrm{ME}})\leq f(x_{\mathrm{GD}})$.
\end{proposition}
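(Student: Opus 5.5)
The plan is to read $x_{\mathrm{ME}}$ through its defining formula~\eqref{eq:Pi_k}: $x_{\mathrm{ME}}=\arg\min_{z\in\Pi}f(z)$, where $\Pi=\{x+\alpha v+\beta w:(\alpha,\beta)\in\R^2\}$, $v=\nabla f(x)$, $w=\nabla f(y)$, and $y=x-t\,v$ is the companion point from Lemma~\ref{lem:existence}. With that reading the whole claim reduces to one membership fact, namely $x_{\mathrm{GD}}\in\Pi$, after which minimality gives $f(x_{\mathrm{ME}})\le f(x_{\mathrm{GD}})$ at once. I will check this membership in both branches of Algorithm ME.

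\emph{Linearly independent branch.} The point $x_{\mathrm{GD}}=x-t^*v$ is obtained from the parametrization with $\alpha=-t^*$ and $\beta=0$. So $x_{\mathrm{GD}}\in\Pi$. This is exactly the observation already used in the proof of Theorem~\ref{thm:convergence}\ref{item:rate_general} (there with $\alpha=-1/L$) and in Theorem~\ref{thm:strict}. Since $x_{\mathrm{ME}}$ minimizes $f$ over $\Pi$, the inequality follows.

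\emph{Linearly dependent branch.} Here $w\parallel v$, so $\Pi$ degenerates to the line $\{x+\alpha v:\alpha\in\R\}$, and by Remark~\ref{rem:LD} the ME step is the exact line search along $-v$. I will show that the Step~3 prescriptions realize this. Set $g(\lambda)=f(x-\lambda t v)$. This $g$ is strictly convex with $g(0)=g(1)$, so its unique global minimizer $\lambda^*$ lies in $(0,1)$. Hence $t^*=\lambda^* t$, with $t^*\in[1/L,1/\mu]$ by Lemma~\ref{lem:LS_step} and $t\in[2/L,2/\mu]$ by Lemma~\ref{lem:tk}. Two consequences follow:
\begin{itemize}
\item The minimizer of $g$ over $[0,1]$, which is rule~(a), equals the unconstrained minimizer on the line. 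So $x_{\mathrm{ME}}=x_{\mathrm{GD}}$ and equality holds.
\item For the midpoint rule~(b), I will use the same identification: $x_{\mathrm{ME}}$, as the minimizer of $f$ on the degenerate $\Pi$, is $x_{\mathrm{GD}}$. The midpoint is then just one admissible point of $\Pi$, and Remark~\ref{rem:LD} gives $f(x_{\mathrm{ME}})\le f(\text{midpoint})$.
\end{itemize}

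The main obstacle is this LD midpoint option. Literally taken, the midpoint $x-\tfrac t2 v$ coincides with $x_{\mathrm{GD}}$ only when $g$ is symmetric about $\lambda=1/2$, as for quadratics via~\eqref{eq:tk_quad}. So the proof must rest on the $\Pi$-minimizer characterization~\eqref{eq:Pi_k} of the ME iterate, which Remark~\ref{rem:LD} explicitly says reduces to exact line search in the LD case. I will state this identification at the start of the proof so that the comparison is between the $\Pi$-minimizer and $x_{\mathrm{GD}}$ in every branch.
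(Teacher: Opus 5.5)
Your proof is correct and is the paper's argument: $x_{\mathrm{GD}}\in\Pi$ via $\alpha=-t^*$, $\beta=0$, plus minimality of $x_{\mathrm{ME}}$ on $\Pi$, with the linearly dependent case settled by identifying the ME iterate with the minimizer on the degenerate line, exactly as the paper's closing sentence (``the two iterates coincide'') does. Your extra care there is warranted: the identification is valid for the segment rule~(a) because $\lambda^*\in(0,1)$, but for the literal midpoint rule~(b) one has $f(x-\tfrac{t}{2}v)>f(x_{\mathrm{GD}})$ whenever $\lambda^*\neq 1/2$. Your second bullet therefore amounts to adopting the $\Pi$-minimizer reading of $x_{\mathrm{ME}}$, which is the only reading under which the statement (and the paper's proof) holds in that branch.
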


\begin{proof}
By Lemma~\ref{lem:existence} there is a unique $\bar t>0$
with $f(y)=f(x)$, where $y=x-\bar t\nabla f(x)$.
Write $v=\nabla f(x)$ and consider the plane
$\Pi=\{x+\alpha v+\beta\nabla f(y):(\alpha,\beta)\in\R^2\}$.
The exact-linesearch point $x_{\mathrm{GD}}=x-t^*v$
corresponds to $\alpha=-t^*$, $\beta=0$, so it lies in
$\Pi$.
Since $x_{\mathrm{ME}}$ minimizes $f$ over $\Pi$, we have
$f(x_{\mathrm{ME}})\leq f(x_{\mathrm{GD}})$.
When $v\parallel\nabla f(y)$, $\Pi$ degenerates to a line
and the two iterates coincide.
\end{proof}

\begin{remark}[Which bound powers which rate]\label{rem:dominance}
In the LI case the full chain is
$f(x_{\mathrm{ME}})\leq f(x_{\mathrm{GD}})
\leq f(x-\tfrac{1}{L}v)
\leq f(x)-\|v\|^2/(2L)$.
The linearly independent rate $\eta$ in
Theorem~\ref{thm:convergence}\ref{item:rate_general}
uses only the last bound; the improved rate $\eta^*$
exploits~\eqref{eq:key_BH}; and the further gain of
Theorem~\ref{thm:strict} uses the additional linesearch
from $y^k$.
\end{remark}

In dimension two the plane $\Pi_k$ coincides with all of
$\R^2$, so ME finds the global minimizer in one step.

\begin{corollary}[One-step convergence in dimension two]\label{cor:dim2}
Let $f:\R^2\to\R$ be $L$-smooth and $\mu$-strongly convex.
If at some step $k$ the vectors $v^k$ and $w^k$ are
linearly independent, then $x^{k+1}=x^*$.
\end{corollary}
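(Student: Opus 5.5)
The argument is a dimension count followed by the first-order optimality conditions of the two-dimensional subproblem. We first show that the search plane is the whole space, and then that the ME iterate is a stationary point of $f$, hence the global minimizer.

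First, since $v^k$ and $w^k$ are linearly independent vectors in $\R^2$, they form a basis of $\R^2$. Consequently the affine set $\Pi_k=\{x^k+\alpha v^k+\beta w^k:(\alpha,\beta)\in\R^2\}$ from~\eqref{eq:Pi_k} is all of $\R^2$. Step~3 (LI case) of Algorithm ME therefore computes $x^{k+1}=\arg\min_{x\in\R^2}f(x)$. This minimizer exists by coercivity (Proposition~\ref{prop:standard}\ref{item:coercive}) and is unique by strong convexity, so it equals $x^*$.

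To make the argument independent of the identification of $\Pi_k$ with $\R^2$, we can instead use Proposition~\ref{prop:orthogonality}\ref{item:orth1}. It gives $\ip{\nabla f(x^{k+1})}{v^k}=\ip{\nabla f(x^{k+1})}{w^k}=0$. A vector in $\R^2$ that is orthogonal to a basis of $\R^2$ must vanish, so $\nabla f(x^{k+1})=0$. By strong convexity~\eqref{eq:sc} applied at $x=x^{k+1}$, we get $f(y)\geq f(x^{k+1})+\frac{\mu}{2}\|y-x^{k+1}\|^2$ for all $y$. Hence $x^{k+1}$ is the unique global minimizer, $x^{k+1}=x^*$.

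There is no real obstacle here. The only point needing care is that the step is taken exactly as in the LI branch, which is the standing assumption of Section~\ref{sec:2Dsolve}: Step~3 is solved exactly. We would also note that the LI hypothesis is needed. If $v^k\parallel w^k$, then $\Pi_k$ collapses to a line and the LD branch applies, which in general does not reach $x^*$.
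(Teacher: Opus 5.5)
Your proof is correct, and your first paragraph is exactly the paper's argument: linear independence in $\R^2$ forces $\Pi_k=\R^2$, so the exact step returns $\arg\min_{\R^2}f=x^*$. Your second route is also valid but redundant: the first-order conditions make $\nabla f(x^{k+1})$ orthogonal to a basis of $\R^2$, hence zero, and strong convexity then makes $x^{k+1}$ the unique minimizer.
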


\begin{proof}
When $n=2$, two linearly independent vectors span all of
$\R^2$, so $\Pi_k=\R^2$ and
$x^{k+1}=\arg\min_{\R^2}f=x^*$.
\end{proof}

\begin{remark}[At most two steps in the plane]\label{rem:dim2_dep}
When $v^k\parallel w^k$, Step~3 reduces to an exact
linesearch giving $\langle\nabla f(x^{k+1}),v^k\rangle=0$.
If $x^{k+1}\neq x^*$, then $v^{k+1}\perp v^k$; generically
$w^{k+1}$ is then independent of $v^{k+1}$, and
Corollary~\ref{cor:dim2} gives $x^{k+2}=x^*$.
Thus ME terminates in at most two steps when $n=2$.
\end{remark}

\begin{remark}[Recovering the quadratic case]\label{rem:dim2_quad}
For quadratic $f$ with $n=2$, Corollary~\ref{cor:dim2}
recovers the one-step result of~\cite{ME2025}, requiring
only $L$-smoothness and $\mu$-strong convexity rather
than quadratic structure.
\end{remark}

\section{Numerical Experiments}\label{sec:numerical}

We test ME on non-quadratic strongly convex functions,
focusing on the rates of Theorems~\ref{thm:convergence}
and~\ref{thm:strict}.
The quadratic case is treated in~\cite{ME2025}.

All methods were implemented in Julia on an iMac with a
3.6\,GHz 10-core Intel Core~i9 processor and 32\,GB DDR4
RAM.
The stopping criterion is $\|\nabla f(x^k)\|\leq 10^{-6}$.
For ME, the 2D subproblem is solved by gradient descent
with Armijo backtracking (inner tolerance
$\|\nabla F^k\|\leq 10^{-12}$) and the companion point is
located by bisection (tolerance
$|g(t)-g(0)|/\max(1,|g(0)|)\leq 10^{-12}$).
The Julia code is publicly available
\href{https://sites.google.com/site/joseyunierbellocruz}{here}.
We compare ME against gradient descent with exact
linesearch (\textbf{GD-exact}), gradient descent with step
$1/L$ (\textbf{GD-$L$}), and Nesterov's accelerated
gradient (\textbf{Fast-GD})~\cite{Nesterov1983}.

We minimize the $\ell_2$-regularized logistic loss
\begin{equation}\label{eq:logreg}
  f(x)=\frac{1}{m}\sum_{i=1}^{m}
       \log\!\bigl(1+e^{-b_i a_i^Tx}\bigr)
       +\frac{\mu}{2}\|x\|^2,
\end{equation}
where $a_i\in\R^n$, $b_i\in\{-1,+1\}$ are drawn from
standard Gaussian distributions, and $\mu>0$ is chosen so
that $\kappa=L/\mu$ matches the target value exactly.
Since $\sigma(u)(1-\sigma(u))\leq 1/4$ for the logistic
sigmoid $\sigma$, an upper bound for $L$ is
$\frac{1}{4m}\sum_{i=1}^{m}\|a_i\|^2+\mu$.
All methods are initialized at $x^1=0\in\R^n$.
The reference value $f^*$ is computed by running Fast-GD
to tolerance $10^{-15}$.

Table~\ref{tab:logreg} reports gradient evaluations,
CPU time, and terminal suboptimality.
ME uses fewer gradient evaluations than all competitors
at $\kappa=100$, and fewer than GD-exact and GD-$L$ at
$\kappa=500$.
The LI condition held at every outer iteration, with
$\sin^2\theta_k\geq 0.083$ ($n=500$) and
$\sin^2\theta_k\geq 0.152$ ($n=1000$) throughout, so the
uniform bound~\eqref{eq:uniform} of Theorem~\ref{thm:strict}
applied at every step.

{\small
\begin{table}[H]
\centering
\begin{tabular}{|l|c|c|c|c|c|}
\hline
\textbf{Method} & $n$ & $\kappa$ & CPU\,(s)
  & Grad.\ evals & $f(x^k)-f^*$ \\
\hline
ME       & 500  & 100 & 0.09          & \textbf{684}
         & $2.38\times10^{-13}$ \\
GD-exact & 500  & 100 & 0.10          & 1{,}024
         & $6.28\times10^{-14}$ \\
GD-$L$   & 500  & 100 & \textbf{0.07} & 1{,}110
         & $3.57\times10^{-13}$ \\
Fast-GD  & 500  & 100 & 0.11          & 710
         & $3.37\times10^{-13}$ \\
\hline
ME       & 1000 & 500 & 0.36          & \textbf{1{,}403}
         & $5.59\times10^{-13}$ \\
GD-exact & 1000 & 500 & \textbf{0.11} & 1{,}582
         & $3.15\times10^{-13}$ \\
GD-$L$   & 1000 & 500 & 0.42          & 5{,}003
         & $8.74\times10^{-13}$ \\
Fast-GD  & 1000 & 500 & 0.32          & 1{,}413
         & $7.80\times10^{-13}$ \\
\hline
\end{tabular}
\vspace{4pt}
\caption{Gradient evaluations, CPU time, and terminal
  suboptimality on~\eqref{eq:logreg} with $m=n/2$.
  All methods stop at $\|\nabla f(x^k)\|\leq 10^{-6}$.
  Bold: best value in each column per instance.}
\label{tab:logreg}
\end{table}
}

\begin{remark}[Gradient evaluation comparison]
\label{rem:grad_count}
ME uses fewer gradient evaluations than every competitor
at $\kappa=100$, and fewer than GD-exact, GD-$L$, and
Fast-GD at $\kappa=500$.
The empirical convergence is substantially faster than the
worst-case bound: to reach ME's terminal accuracy, the
theoretical rate $\eta^*$ for $\kappa=100$ would require
roughly $1{,}320$ iterations, whereas ME terminates in
$342$.
This gap is explained by $\sin^2\theta_k\geq 0.083$
throughout, which activates the per-step improvement of
Theorem~\ref{thm:strict} at every iteration.
\end{remark}

Figure~\ref{fig:convergence} shows $\log_{10}(f(x^k)-f^*)$
against gradient evaluations for $n=1500$, $m=750$,
$\kappa=100$.
ME terminates well before GD-exact.
Fast-GD converges non-monotonically due to the Nesterov
momentum and requires knowledge of $L$.

\begin{figure}[H]
\centering
\includegraphics[width=0.9\columnwidth]{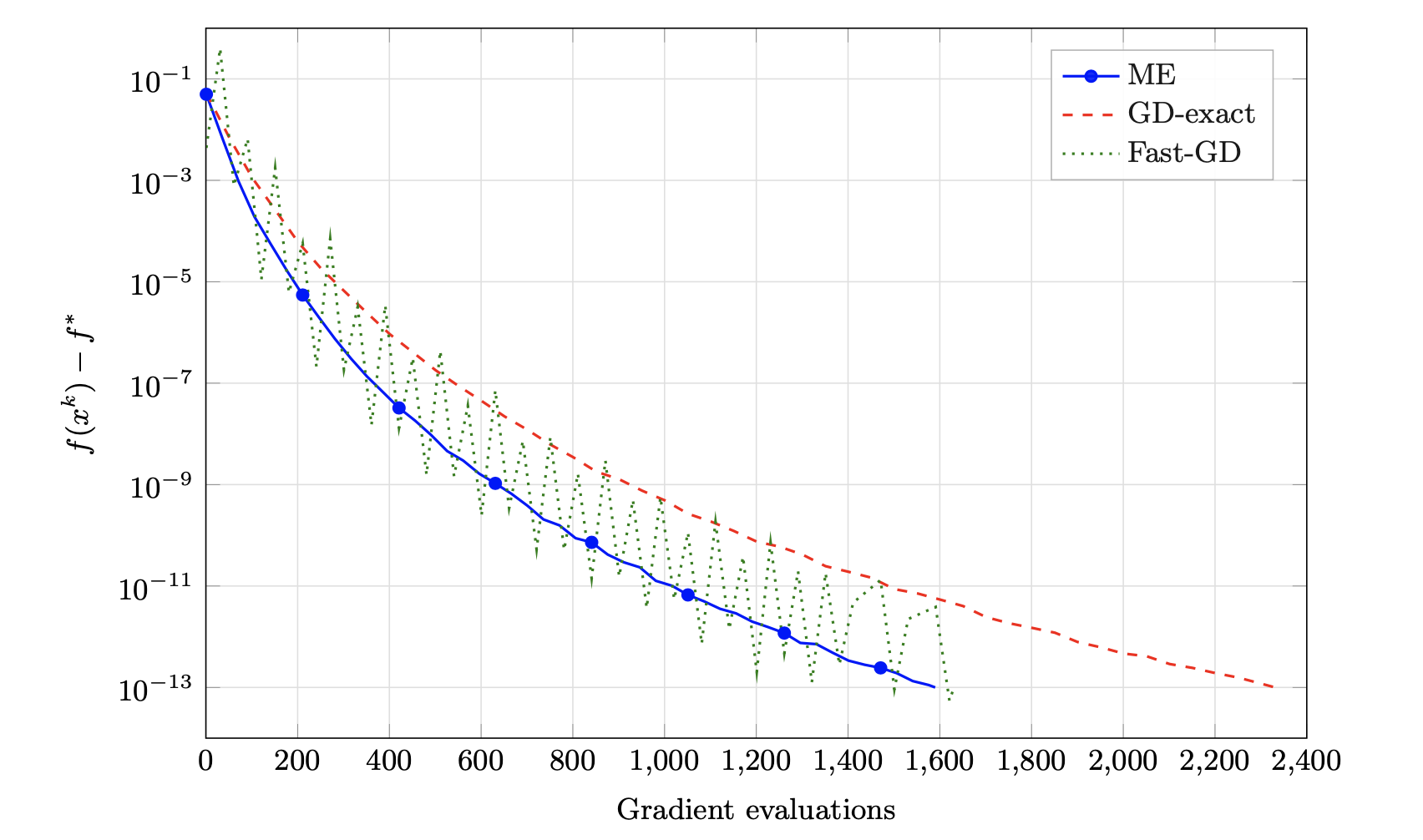}
\caption{Convergence on~\eqref{eq:logreg} with $n=1500$,
  $m=750$, $\kappa=100$.
  ME uses fewer gradient evaluations than GD-exact,
  consistent with the improved rate of
  Theorem~\ref{thm:strict}.
  Fast-GD converges non-monotonically due to the Nesterov
  momentum and requires knowledge of $L$.}
\label{fig:convergence}
\end{figure}

\section{Concluding Remarks}\label{sec:conclusion}

We have extended ME from the quadratic setting
of~\cite{ME2025} to $L$-smooth $\mu$-strongly convex
functions.
The companion point $y^k$ is well-defined by
Lemmas~\ref{lem:uniqueness} and~\ref{lem:existence}.
Unconditionally, ME converges linearly at rate
$1-\mu^2/L^2$, established from strong convexity and the
companion-step bound $t_k\geq 2/L$ (Lemma~\ref{lem:tk})
without any genericity assumption.
At every linearly independent step the descent
bound~\eqref{eq:key_BH}, derived from the 2D optimality
conditions, sharpens this to
$\eta^*=(\kappa-1)/(\kappa+1)$, matching gradient descent
with exact linesearch; for $n\geq 2$ this is every step
generically.
In that case the level-set symmetry $f(y^k)=f(x^k)$ permits
a second linesearch, and the midpoint argument yields the
per-step rate $\bar\eta_k=\eta^*-\sin^2\theta_k/(4\kappa^2)$,
satisfying $(\eta^*)^2<\bar\eta_k<\eta^*$ for all
$\kappa\geq 2$.

Five problems remain open.
First, whether the uniform improvement of
Theorem~\ref{thm:strict} holds with a $\kappa$-free
constant: Proposition~\ref{prop:obtuse} keeps $\cos\theta_k$
bounded away from $0$, so the only way $\sin^2\theta_k$ can
degrade is the anti-parallel limit $w^k\to-\lambda v^k$,
which is exactly the linearly dependent boundary excluded
in the LI branch.
A quantitative bound on the distance of the iterates from
this boundary would yield a uniform improvement of the
form~\eqref{eq:uniform} with $c$ independent of $\kappa$;
we have not been able to rule out the degenerate limit in
the worst case.
Second, for quadratics ME achieves rate $(\eta^*)^2$ in
iterate distance~\cite{ME2025}, but for general $f$ our
bound carries a factor $\kappa$; a Lyapunov function
adapted to the local geometry of $C_k$ would likely close
this gap.
Third, the bisection and 2D inner solver currently
dominate the gradient budget; a warm-started or
closed-form inner step would make ME competitive on total
evaluations.
Fourth, a local ellipsoidal approximation to $C_k$ using
curvature at $x^k$ and $y^k$ is a natural second-order
variant with potentially superlinear convergence.
Fifth, extending ME to composite objectives $h+g$ with
$g$ non-smooth is obstructed by the loss of smoothness
of $\{h+g=c\}$, making $y^k$ ill-defined; resolving
this requires either a smooth approximation of $g$ or a
different construction of the companion point.

\paragraph{Acknowledgements.}
The author thanks Roger Behling and Luiz-Rafael Santos
for ongoing discussions on the Method of Ellipcenters for quadratic functions,
which began approximately eight years before the
appearance of~\cite{ME2025}.
The present paper extends those ideas to the broader
class of $L$-smooth, $\mu$-strongly convex functions.

\end{document}